\newtheorem{theorem}{Theorem}
\newtheorem{proposition}{Proposition}
\newtheorem{example}{Example}
\renewenvironment{proof}{ \noindent {\bfseries Proof.}}{qed}
\newtheorem{remark}{Remark}
\newtheorem{lemma}{Lemma}
\newtheorem{corollary}{Corollary}
\def\Div{\text{div}}
\def\sign{{\rm sign}}
\def\eps{\varepsilon}
\def\C{\hbox{\rlap{\kern.24em\raise.1ex\hbox
      {\vrule height1.3ex width.9pt}}C}}
\def\R{\mathbb{R}}
\def\S{\mathbb{S}}
\def\I{{\rm I\kern-.2em I}} 
\def\E{{\rm I\kern-.2em E}} 
\def\P{\hbox{\rlap{I}\kern.16em P}}
\def\Q{\hbox{\rlap{\kern.24em\raise.1ex\hbox
      {\vrule height1.3ex width.9pt}}Q}}
\def\M{\hbox{\rlap{I}\kern.16em\rlap{I}M}}
\def\N{\hbox{\rlap{I}\kern.16em\rlap{I}N}}
\def\Z{\hbox{\rlap{Z}\kern.20em Z}}
\def\({\begin{eqnarray}}
\def\){\end{eqnarray}}
\def\[{\begin{eqnarray*}}
\def\]{\end{eqnarray*}}
\def\part#1#2{{\partial #1\over\partial #2}}
\def\dt{\partial_t}
\def\pd{\partial}
\def\H{\mathcal{H}}
\def\pmb#1{\setbox0=\hbox{$#1$}
  \kern-.025em\copy0\kern-\wd0
  \kern-.05em\copy0\kern-\wd0
  \kern-.025em\raise.0433em\box0 }
\def\bar{\overline}
\def\p{\varphi}
\def\R{\mathbb{R}}
\newcommand{\dd}{\, \text{d}}
\newcommand{\ve}{\tilde v}
\title{Large-scale dynamics of self-propelled particles moving through obstacles: model derivation and pattern formation} 
\author{ P. Aceves-Sanchez$^1$, P. Degond$^2$, E. E. Keaveny$^2$,\\ A. Manhart*$^{3}$, S. Merino-Aceituno$^4$, D. Peurichard$^5$} 
\date{} 
\begin{document}

\maketitle

\begin{center}
1. Department of Mathematics, North Carolina State University,\\
Raleigh, North Carolina 27695, USA,\\ paceves@ncsu.edu
\end{center}

\begin{center}
2. Department of Mathematics, Imperial College London\\
London, SW7 2AZ, United Kingdom,\\ p.degond@imperial.ac.uk, e.keaveny@imperial.ac.uk\\
\end{center}

\begin{center}
3. Department of Mathematics, University College London, \\
London, WC1H 0AY, United Kingdom,\\ * \textit{corresponding author}, a.manhart@ucl.ac.uk\\
\end{center}

\begin{center}
4. Faculty for Mathematics, University of Vienna, \\
Oskar-Morgenstern-Platz 1, Vienna, 1090, Austria\\
Department of Mathematics, University of Sussex\\
Brighton, BN1 9RH, United Kingdom, \\sara.merino@univie.ac.at
\end{center}

\begin{center}
5. Inria Paris - MAMBA project team, Sorbonne University,\\ Laboratory Jacques-Louis Lions, Paris, France,\\ diane.a.peurichard@inria.fr
\end{center}

%    ------------------
%    +    ABSTRACT    +
%    ------------------
\noindent{\bf Abstract.} We model and study the patterns created through the interaction of collectively moving self-propelled particles (SPPs) and elastically tethered obstacles. Simulations of an individual-based model reveal at least three distinct large-scale patterns: travelling bands, trails and moving clusters. This motivates the derivation of a macroscopic partial differential equations model for the interactions between the self-propelled particles and the obstacles, for which we assume large tether stiffness. The result is a coupled system of non-linear, non-local partial differential equations. Linear stability analysis shows that patterning is expected if the interactions are strong enough and allows for the predictions of pattern size from model parameters. The macroscopic equations reveal that the obstacle interactions induce short-ranged SPP aggregation, irrespective of whether obstacles and SPPs are attractive or repulsive.
\vskip 1cm
%\left 
%    -------------------
%    +    KEY WORDS    +
%    -------------------
\noindent{\bf Key words:} Self-propelled particles; hydrodynamic limit; pattern formation; stability analysis; gradient flow; non-local interactions
\medskip

\noindent{\bf AMS subject classification:} 35Q70; 82C05; 82C22; 82C70; 92B25; 92C35; 76S05;         
\medskip

%    -------------------------
%    +    ACKNOWLEDGMENTS    +
%    -------------------------
\noindent{\bf Acknowledgements:}
%\vfil\eject
SMA is supported by the Vienna Science and Technology Fund (WWTF) with a Vienna Research Groups for Young Investigators, grant VRG17-014. PD acknowledges support by the Royal Society and the Wolfson Foundation through a Royal Society Wolfson Research
Merit Award no. WM130048 and by the National Science Foundation (NSF) under
grant no. RNMS11-07444 (KI-Net). PD is on leave from CNRS, Institut de
Math\'ematiques de Toulouse, France. PD, EEK, PAS, SMA and AM acknowledge support from Engineering and Physical Sciences Research Council (EPSRC) grant EP/P013651/1. PAS, SMA and AM acknowledge that part of the work was done at Imperial College London. PD, PAS and SMA also acknowledge support from EPSRC grant EP/M006883/1. 

\medskip
\noindent{\bf Data statement:} No new data were collected in the course of this research.
%    -------------------------
%    +    TABLE OF CONTENTS  +
%    -------------------------

%\tableofcontents

%    ----------------------
%    +    INTRODUCTION    +
%    ----------------------
\section{Introduction}

This work is devoted to deriving and analysing a model of collectively moving self-propelled particles that interact with a complex, heterogeneous environment. The field of collective dynamics studies what happens when a large number of agents, which can be animals, people, micro-organisms, crystals, etc., interact with each other. A particular focus is the emergence of large scale order or patterns. Famous examples include global alignment in crystals \cite{DeGennes_Prost_1993}, lane formation for people \cite{Feliciani2016}, waves and aggregation in bacteria \cite{Shimkets1990, Ben-Jacob2000}, milling in schools of fish \cite{Shaw1978} or swarming in birds \cite{Cavagna2010}. All these examples have in common that local, small-scale interaction rules between individuals lead to global, large-scale patterns. These patterns are typically hard or impossible to predict from the local interaction rules, hence their understanding requires the use of either extensive simulations or mathematical analysis.

\paragraph{Combining collective dynamics and environmental effects.} In many systems one also needs to take into account the environment to be able to explain observed patterns in collective phenomena \cite{chepizhko2013optimal,chepizhko2013diffusion,
jabbarzadeh2014swimming,park2008enhanced}. For cells moving through a tissue, this environment often includes fibres and other components. For instance, it has been observed that many cell types have a tendency to move up stiffness gradients, a phenomenon termed \textit{durotaxis} \cite{Lo2000}. In some of these instances the effect on the substrate is negligible. However in many applications the interaction modifies the environment (either permanently or transiently) in a way that affects subsequent interactions. An example is the degradation of the extracellular matrix (ECM) caused by migrating cells \cite{Baricos1995}, which affects the ECM structure and hence future migration. In this work we want to combine collectivity and environmental interactions and study the resulting patterns. Known examples of patterns created include travelling bands of large swarms of scavenging locust \cite{Buhl2006, Topaz2008}, the formation of paths in grass-land by active walkers \cite{Helbing1997,Lam1995} or aggregation of individuals \cite{Bernoff2016}. For metastasising cancer cells it was observed that the invasion success depends on whether they move individually or as small clusters \cite{Cheung2016}.

\paragraph{Obstacles can emulate complex environments.} 
The importance of the environment is particularly true for sperm dynamics, where the surrounding fluid plays a key role in the emergence of collective motion. For example clustering and large-scale swirling was observed in simulations of collectively moving sperm in \cite{Schoeller2018,Sokolov2007}. In \cite{Degond2019} a model was proposed that couples the Vicsek model for collective dynamics with Stokes equations for a viscous fluid. However, sperm dynamics takes place in a complex fluid, whose constitutive properties cannot be characterised solely by a viscosity. To approximate the complex environment the introduction of immersed obstacles has been proposed \cite{kamal2018enhanced,majmudar2012experiments,wrobel2016enhanced}. For example, in \cite{kamal2018enhanced} the authors propose a model in which an undulatory swimmer swims in a fluid filled with elastically tethered obstacles, however effects of collective dynamics, i.e. multiple swimmers, were not investigated. In this article we present a model for collective motion in an environment filled with spheres tethered to fixed points in space via linear springs, that play the role of obstacles. We will study the impact of this obstacle-based environment on the collective dynamics for a  large number of self-propelled particles (SPPs).

\paragraph{Individual vs continuum description.} From a modelling perspective, two approaches are common \cite{Mogilner2016}. One can formulate a system of individual-based models (IBMs), also called agent-based models, where the behaviour of each individual is assumed to be governed by separate, often stochastic ordinary differential equations. This approach has the advantage that the translation of modelling assumptions of the individual level is relatively straight-forward. However, few analytical tools are available to study IBMs and even if the system exhibits the desired property, limited insight can be gained as to why it does so. On the other hand, one can formulate a partial differential equation (PDE) model for the macroscopic quantities of interest, e.g. the space and time dependent density of agents. A rich mathematical toolbox exists for the analysis of PDEs, which includes linear stability analysis, constructions of steady states as well as efficient simulation tools. Substantial progress has been made to establish systematic links between IBMs and the corresponding PDEs, \cite{Degond2008, Ha2008}. This allows to combine the advantages of both methods: straight forward translation of biological assumptions into the IBM, and strong analytical tools for the PDE model. The self-organised hydrodynamics (SOH) approach \cite{Degond2008} used in this work has been successfully applied e.g. to fibre interactions \cite{Peurichard2016}, bacterial swarms \cite{Manhart2018}, sperm fertility \cite{Creppy2016} or ant trail formation \cite{Boissard2013}.

\paragraph{Paper structure.} In Sec.~\ref{sec:IBM} we present the individual-based model, at whose basis lies the famous Vicsek model \cite{Vicsek1995}. This model describes SPPs that align their orientation with neighbouring particles, to which we add a short ranged repulsion term. The environment consists of obstacles which are tethered via linear springs to anchor points fixed in space. SPPs and obstacles exert either repulsive or attractive forces on each other. Simulations of the IBM reveal the richness of possible patterns for this simple system, which includes clustering, trail formation and travelling bands, and motivate the formulation of a macroscopic PDE model of the SPP-obstacle interactions. The derivation of the macroscopic model, presented in Sec.~\ref{sec:derivation}, builds on the SOH technique for the SPPs, but requires new techniques for the obstacles. We focus on a particular asymptotic regime, where the obstacle tethering is strong, i.e. strong spring stiffness. The derived macroscopic model for SPP-obstacle interactions is presented and interpreted in Sec.~\ref{ssec:macroswimmer} and the main theorem is proven in Sec.~\ref{ssec:proof}. We capitalize on the macroscopic model by analysing pattern formation through linear stability analysis in Sec.~\ref{ssec:stabAnalysis}. In Sec.~\ref{ssec:micromarcokernel} we use the macroscopic model to discover that obstacles mediate an effective SPP interaction with biphasic behaviour. Finally in Sec.~\ref{sec:numerics} we perform simulations in one space dimension of the macroscopic and individual-based model and compare the results to each other and the analytical results.

%    -------------------
%    IBM Model and Sims 
%    ------------------
\section{The Individual-Based Model (IBM)}
\label{sec:IBM}

\begin{figure}[t]
\centering
\includegraphics[width=0.9\textwidth]{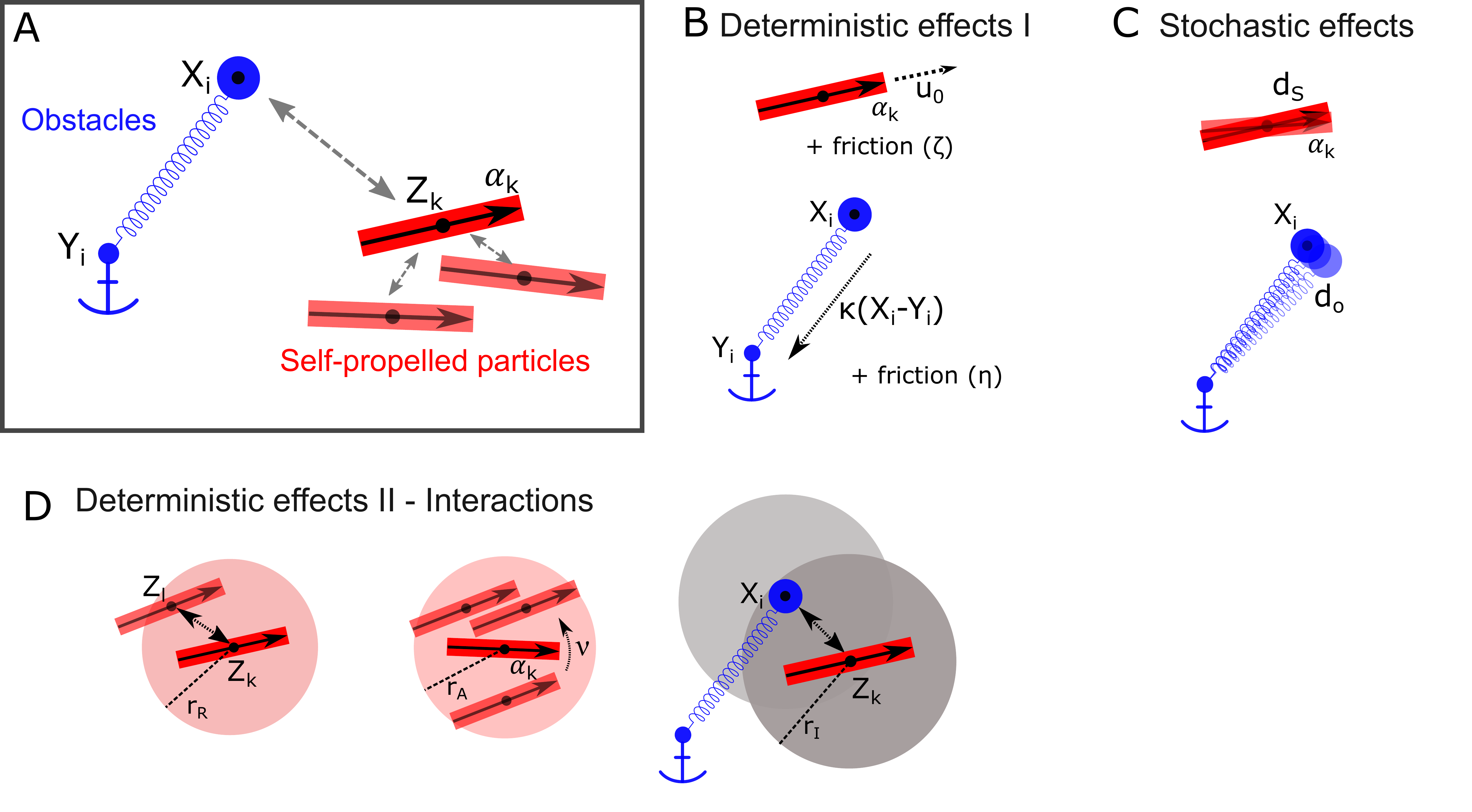}
\caption{\small{Ingredients of the IBM. A: Show are the two types of agents of the IBM, the SPPs (red) and the obstacles (blue). B: Deterministic effects that affect each agent individually. SPPs self-propel themselves and experience friction, obstacles are elastically tethered to their anchor points and also experience friction. C: Stochastic effects for the SPPs (orientation) and the obstacles (position). C: Interactions include SPP repulsion, SPP alignment and SPP-obstacle interactions.}}
\label{fig:IBM}
\end{figure}

\subsection{Formulation of the IBM}

The starting point for our investigation is an individual-based model (IBM), in which the dynamics of each component is described by individual equations coupled through interaction terms. We couple the famous Vicsek model for collective movement of self-propelled particles (SPPs) \cite{Vicsek1995} with an environmental model, described by elastically tethered obstacles. Our IBM is set in $n$-dimensional space, where $n=1,2$ or $3$. The two components and interactions are depicted schematically in Fig.~\ref{fig:IBM}. Several applications of collective movement, in particular when applied to cells, take place at the micro-scale. These regimes are typically friction dominated with negligible inertia (also called over-damped regime). We therefore formulate our model in this friction dominated regime.

\paragraph{Model Components.} We model the following two types of agents:
\begin{itemize}
\item {\it Obstacles:} We consider a set of $N$ mobile obstacles with positions $X_i(t)\in \R^n$ for $i = 1, 2, \ldots, N$ and time $t\geq 0$. Each obstacle is tethered to a fixed anchor point $Y_i\in \R^n$ through a Hookean spring with stiffness constant $\kappa>0$ and experiences friction with the environment with friction constant $\eta>0$.
\item {\it SPPs:} We denote by $Z_k(t)\in \R^n$ the positions of the $k$-the SPP at time $t \geq 0$ for $k = 1, 2, \ldots , M$. Each SPP has a body orientation $\alpha_k(t)\in \mathbb{S}^{n-1}$ and a self-propulsion speed $u_0$ in direction $\alpha_k$. SPPs experience friction with the environment with friction constant $\zeta>0$.
\end{itemize}

\paragraph{Interactions.} We consider the following interactions:
\begin{itemize}
\item {\it SPP alignment:} We assume each SPP aligns its body orientation $\alpha_k$ to the mean orientation $\bar\alpha_k$ of body directions of SPPs in its neighbourhood with radius $r_A$. This happens with an alignment frequency $\nu>0$ and is analogous to the famous Vicsek model for collective swarming \cite{Vicsek1995}.
\item {\it SPP repulsion:} SPPs repel each other at short distances, which models size-exclusion effects. Following \cite{Degond2015} we model this by an even pushing potential $\psi: \R^n \mapsto \R$ with typical spatial scale $r_R>0$. The force felt between two SPPs positioned at $Z_i$ and $Z_j$ is then given by $\nabla \psi \left(Z_i-Z_j\right)$.
\item {\it Obstacle-SPP interaction:} We assume the obstacles and SPPs exert a force on each other, which depends on the the distance between them. Similar to the SPP repulsion, we describe this by an even interaction potential $\phi: \R^n \mapsto \R$ with typical scale $r_I$, yielding the force $\nabla \phi \left(Z-X\right)$ for a SPP at position $Z$ and an obstacle at position $X$. In general we assume this force to be repulsive, however we will discuss the effect of an attractive force in Sec.~\ref{sec:1dAnalysis}.
\end{itemize}

\paragraph{Stochasticity.} We include two sources of uncertainty, both modelled by independent Brownian motions: Stochastic effects in the obstacle position (with intensity $d_o$) as well stochastic effects in the SPP orientation (intensity $d_s$).

\paragraph{Model Equations.} The effects described above can be modelled through the following coupled, stochastic ODEs. Note that in the absence of obstacles, the equations reduce to the time-continuous Viscek model, described e.g. in \cite{Vicsek1995}. From here on we work with the non-dimensional variables (but keeping the same names as introduced above), in particular we haven chosen the domain size $L$ as reference length and $L/u_0$ as reference time. The latter can be interpreted as the time it takes a freely moving SPP to cross the domain. We then obtain:

\begin{subequations}
\label{eqn:IBMmodel_massless}
\begin{align}
 \dd X_i                                 =& -\frac{\kappa}{\eta}(X_i-Y_i)\dd t -\frac{1}{\eta}\frac{1}{M}\sum_{ k = 1}^M \nabla \phi \left(X_i - Z_k\right) \dd t+ \sqrt{ 2 d_o} \, \dd B^{i}_t ,\\
  \dd Z_k                                 =& \alpha_k \dd t - \frac{1}{\zeta}\frac{1}{N}\sum_{i = 1}^N \nabla \phi \left(Z_k - X_i\right)\dd t -\frac{1}{\zeta}\frac{1}{M} \sum_{l \neq k}^M \nabla \psi \left(Z_k - Z_l\right)\dd t,\\
  \dd \alpha_k                         =& P_{ \alpha_k^\perp} \circ \bigg[ \nu \bar{\alpha}_k \dd t + \sqrt{2 d_s} \, \dd \tilde B^{k}_t \bigg]\label{eqn:IBMorient},
\end{align}
\end{subequations}
where the mean direction $\bar{\alpha}_k$ is defined via the mean flux $J_k$ by
\begin{align}
\label{eqn:meanDir}
\bar{ \alpha}_k = \frac{ J_k}{ | J_k|},\quad \text{ where } J_k =\mkern-18mu \sum_{\substack{j = 1 \\ |Z_k-Z_j|\leq r_A}}^M \mkern-18mu \alpha_j.
\end{align}
The tether positions $Y_i$ are given and do not change in time. The operator $P_{\alpha_k^\perp}$ in \eqref{eqn:IBMorient} in an orthogonal projection onto $\alpha_k^\perp$ and ensures that if $\alpha_k(0)\in \mathbb{S}^{n-1}$, then $\alpha_k(t)\in \mathbb{S}^{n-1}$ for all time. Note that we have scaled the interaction terms by the number of SPPs or obstacles to prepare for the kinetic limit of Sec.~\ref{ssec:meanField}.

\begin{remark}[Modelling choices]
In an attempt to create a minimal model, we did not include a number of effects. For example, as opposed to \cite{Degond2015}, we don't model relaxation of the SPP orientation to the SPP velocity. Notice also that we did include repulsion between SPPs, but not repulsion between the obstacles. The former helps avoid collapse of the SPP density. For the obstacles on the other hand, this seems to be less likely due their tethering in space. Also, there is no coupling to a surrounding fluid, which will be subject of future work.
\end{remark}

\subsection{Simulations of the 2D IBM}
\label{ssec:IBMsim}

We simulate the IBM \eqref{eqn:IBMmodel_massless} in two space dimensions. In this work, instead of doing a more thorough investigation, we want to showcase what types of patterns can be created based on the environmental interactions, emphasising the need for a PDE-based description.

\paragraph{Simulation set-up.} We simulate the IBM using $N=5000$ SPPs and $M=5000$ obstacles on a 2D square domain $\mathcal{B}=[0,1]\times[0,1]$ with periodic boundary conditions. We distribute the fixed anchor points $Y_i$ using a uniform distribution on $\mathcal{B}$ and initialize the obstacle positions with $X_i(0)=Y_i$. Initial SPP positions $Z_k(0)$ and orientations $\alpha_k(0)=(\cos(\p_k), \sin(\p_k))$ are both chosen at random with uniform distributions on $\mathcal{B}$ for $Z_k(0)$ and on $[0,2\pi]$ for $\p_k$. For the interaction potentials we use kernels of the following shape
\begin{align*}
&\phi(x)=\frac{3A_I}{2 r_I^3 \pi}(r_I-|x|)^2 H(r_I-|x|),\quad \psi(x)=\frac{3A_R}{2 r_R^3 \pi}(r_R-|x|)^2 H(r_R-|x|),
\end{align*}
where $H(x)$ is the Heaviside function. These kernels are compactly supported on balls with radius $r_I$ and $r_R$ respectively and chosen to yield a continuous pushing force decreasing linearly. They are normalized such that the force mass is $A_I$ and $A_R$ respectively. For simplicity we choose all interaction radii to be the same, i.e. $r_I=r_A=r_R$. We leave the following parameters constant: $d_o=0$, $\eta=1$, $d_s=0.1$,  $A_I=1$.  We're left with five parameters: $\kappa$, $\zeta$, $\nu$, $r_I$ and $A_R$.

\begin{figure}[t]
\centering
\includegraphics[width=0.95\textwidth]{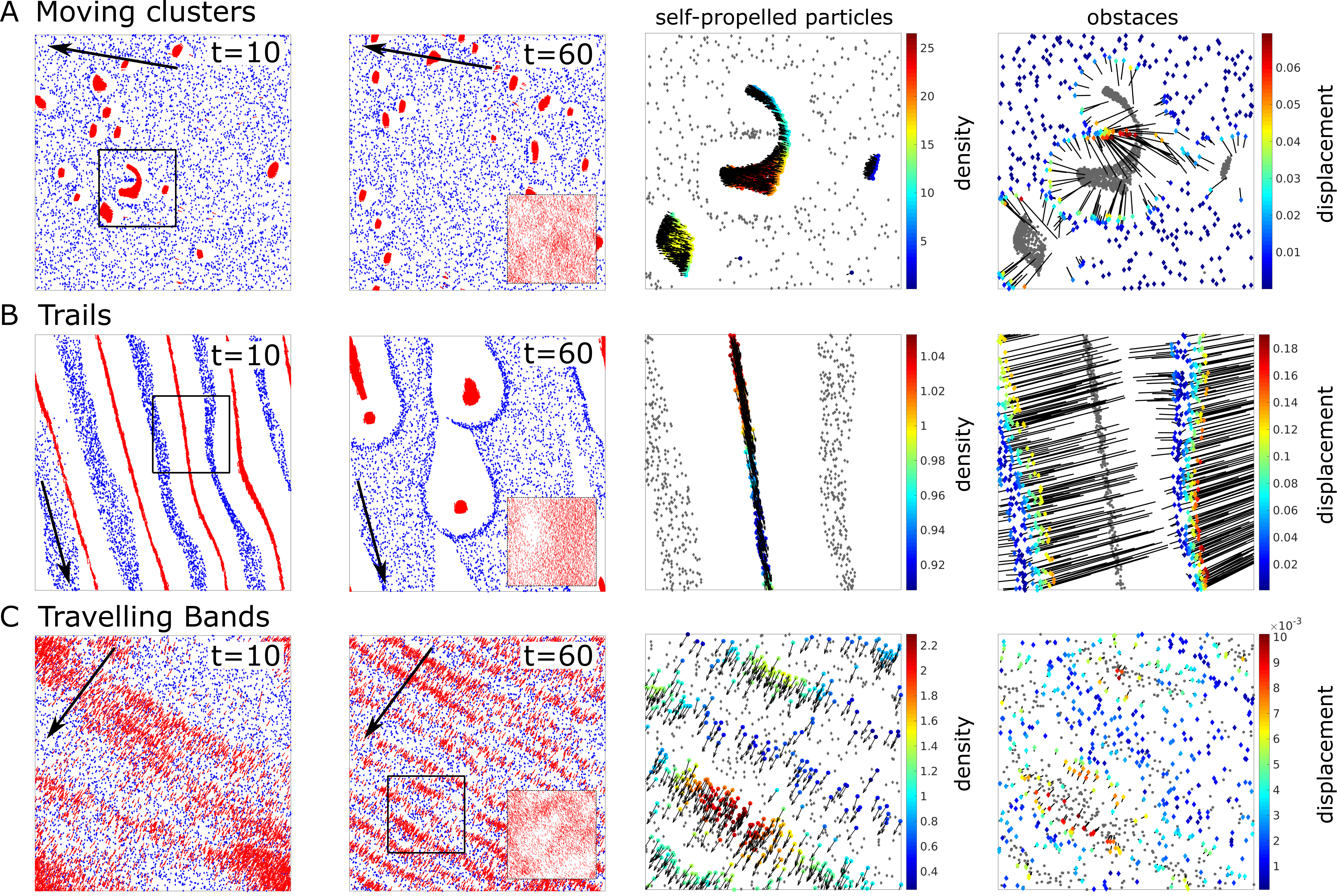}
\caption{\small{IBM patterns. Depicted are snapshots for three different example patterns from the IBM simulations: Moving clusters (A), trails (B) and travelling bands (C). The first two columns show the  SPPs (red arrows) and the obstacles (blue diamonds) in the full 2D simulation domain at two different time points, black arrows mark the mean SPP direction. Insets in the second column show resulting SPP positions for simulations without obstacles. The last two columns show enlargements of the black box in the first columns. In the `SPP' column, the obstacles are shown in grey and the SPPs as black arrows, colors mark SPP neighbourhood density. In the `obstacles' column, SPPs are shown in grey. The lines connect each obstacle to their anchor point, color marks obstacle displacement. Videos can be found in the Supp.~Mat.}}
\label{fig:IBM_patterns}
\end{figure}

\paragraph{IBM simulation results.} Figs.~\ref{fig:IBM_patterns} shows examples of the different patterns produced by different choices of parameters and Fig.~\ref{fig:IBM_stats} shows some associated statistics. Corresponding videos can be found in the Supp.~Mat. The second row in Fig.~\ref{fig:IBM_stats} shows that in all three cases SPPs globally align, i.e. the variance in SPP direction decreases. We call the observed patterns: {\it Moving clusters} ($\kappa=100$, $\zeta=1$, $\nu=10$, $r_I=0.05$, $A_R=0.01$), {\it Trails}  ($\kappa=2.5$, $\zeta=10$, $\nu=100$, $r_I=0.15$, $A_R=0.002$) and {\it Travelling bands} ($\kappa=100$, $\zeta=40$, $\nu=10$, $r_I=0.05$, $A_R=0.002$) and give a short description of them.\newline

\textit{Moving clusters:} In this regime, tether stiffness and SPP-obstacle repulsion is high. The SPPs form very high density groups moving through the obstacles, whose displacement from the anchor points is relatively low. In Fig.~\ref{fig:IBM_patterns}A, we see how a larger cluster is split into two due to the obstacles, suggesting that the cluster size is controlled by the dynamics. This might also be the reason for the relatively large changes in mean SPP density over time seen in Fig.~\ref{fig:IBM_stats}. Nevertheless this pattern seems to be stable.\newline

\textit{Trails:} Here, SPP alignment is strong, with low tether stiffness. The SPPs form stripes parallel to their movement direction, which at $t=10$ seem to very regularly spaced. Within the stripes the SPPs are close together and consequently push the obstacles away from the trails, leading to large obstacle displacements. Interestingly, the trails become unstable and by $t=60$, the SPPs form moving groups. We see this instability building up and the trails falling apart around $t=26$ in Fig.~\ref{fig:IBM_stats}B. The enlargements in Fig.~\ref{fig:IBM_patterns} indicate that the instability of the trails might stem from the fact that the obstacles are not symmetrically displaced to the right and left of the moving trails.\newline

\textit{Travelling bands:} In this pattern the spring strength is high and obstacle displacements are consequently small. SPPs now form bands normal to their direction of movement. At $t=60$ we see in Fig.~\ref{fig:IBM_patterns} that there appears to be a typical spacing between the bands. These patterns seem to be stable.\newline

\begin{figure}[t]
\centering
\includegraphics[width=0.8\textwidth]{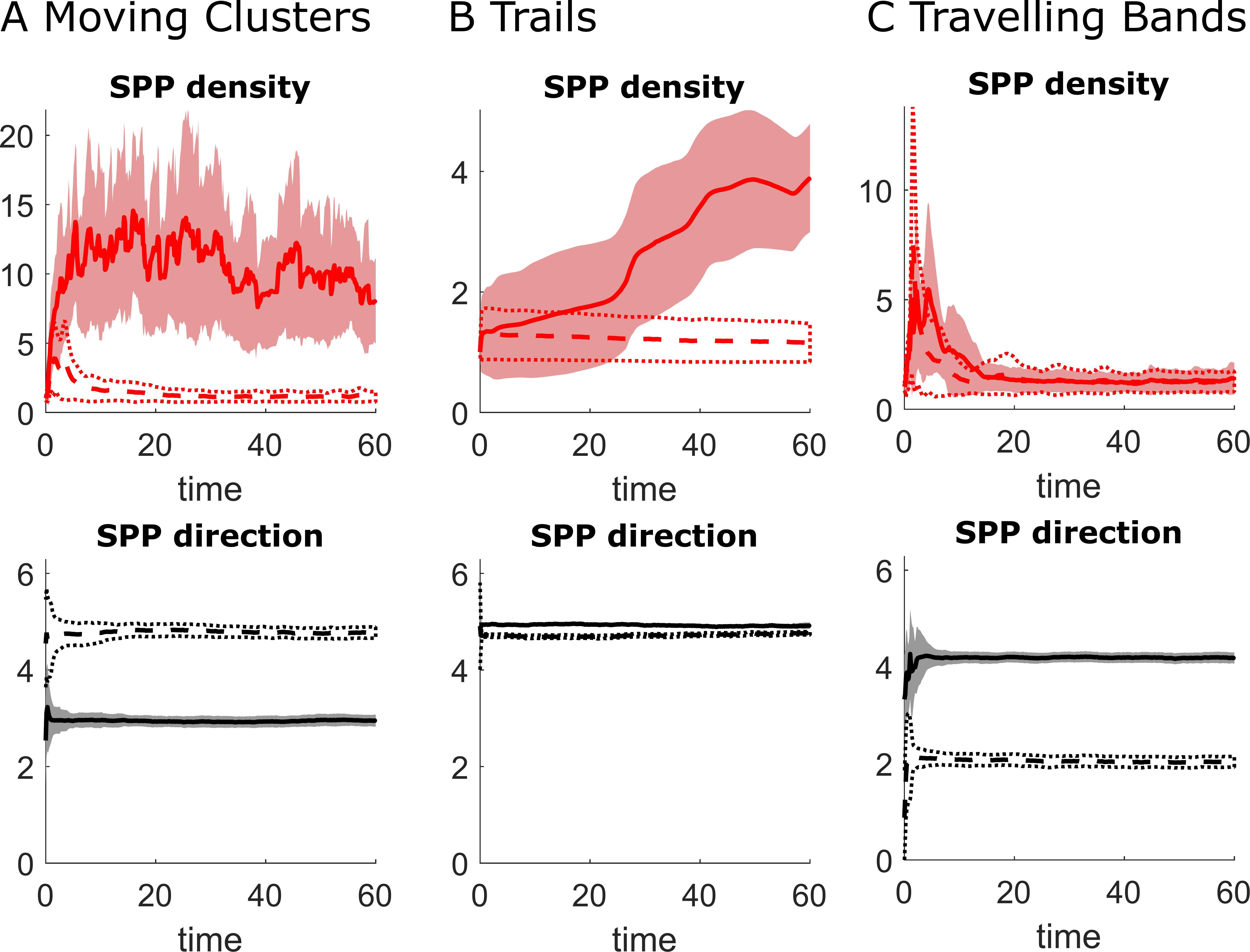}
\caption{\small{IBM statistics. Shown are some statistics for the IBM patterns in Fig.~\ref{fig:IBM_patterns} for moving clusters (A), trails (B) and travelling bands (C). Solid and dashed lines mark averages for simulations with and without obstacles respectively, shaded areas and dotted lines corresponding averages$\pm$standard deviations. SPP densities are calculated for each SPP by calculating the density within a disc of radius $r_A=r_R$ and dividing by the mean density in the domain.}}
\label{fig:IBM_stats}
\end{figure}

\paragraph{Obstacles reinforce and diversify patterns.} To assess the influence of the environment on the pattern formation we compare to simulations of the model without obstacles, i.e. pure Vicsek type dynamics with small SPP repulsion. In the inset in the second column in Fig.~\ref{fig:IBM_patterns} we see that in all three examples there is no patterning in absence of the obstacles. Fig.~\ref{fig:IBM_stats} shows that the alignment behaviour seems unaltered by the obstacles, however for moving clusters and trails the obstacles lead to much higher SPP densities. In is known that for some specific ranges of parameters, clusters and bands already appear in simulations of the Vicsek model alone \cite{Vicsek1995}. However, in the presence of obstacles their qualitative behaviour is different: the environment seems to reinforce such structures and the travelling bands appear to be regularly spaced, which is not the case for bands in the Vicsek model alone. In addition the homogeneous phase (common to the Vicsek dynamics) appears to be less common here. Finally, we observe that also a completely new pattern emerges: trails. 

\paragraph{The need for a PDE description.} The three patterns founds by simulating the IBM show that the interactions between SPPs and obstacles can lead to a rich repertoire of patterns such as clustering, trails and travelling bands. While the system is relatively simple, the number of parameters make is prohibitively expensive to explore fully the complete parameter space. These patterns were found by rough and preliminary parameter scans and we expect that there exist in fact many more patterns. For each example pattern a number of questions arise: Clusters:  It seems that large clusters are split and that there is an intrinsic cluster size. If that is the case how is cluster size controlled and how is it determined from parameters? Trails: The observed trails appear to be a transient, unstable pattern. What makes them unstable and can other parameters produce stable trails? Travelling bands: How is this pattern created and what determines the wavelength and stability?\newline

All these questions suggest that a continuous, PDE-based description of the system is crucial to understanding the observed patterns, as well as to discover others. A PDE-description has several advantages: Patterns such as travelling bands can be constructed explicitly and a stability analysis can performed. Further the PDE description is inherently an averaging process reducing the number of parameters. Lastly, since instead of numerically solving thousands of coupled ODEs, one has to solve only a few PDEs, which makes the simulations much more efficient. The next section is therefore devoted to the derivation of the PDE-based description of the SPP-obstacle model.

%    -----------------------
%    Derivation of SOH-Model 
%    -----------------------
\section{Derivation of the Macro-Model}
\label{sec:derivation}

In this section we derive a macroscopic PDE-based model for the SPPs and the obstacles. The IBM model in \eqref{eqn:IBMmodel_massless} serves as the starting point. The derivation is a two-step process: First we formally derive a kinetic description for both the SPPs and the obstacles by taking a mean-field limit. In the second step we use a hydrodynamic scaling for the SPPs and derive equations for the SPP density and orientation. For this step we use previous work \cite{Degond2008, Degond2015}. For the obstacles we focus on a particular parameter regime and assume to have low obstacle noise and strong obstacle spring stiffness. The main technical difficulty and new derivation strategy lies in this last step. Fig.~\ref{fig:deriv_overview} summarises the different derivation steps. Throughout the document the domains of integrations are understood to mean the whole domain, unless specified otherwise.

\begin{figure}[t]
\centering
\includegraphics[width=0.8\textwidth]{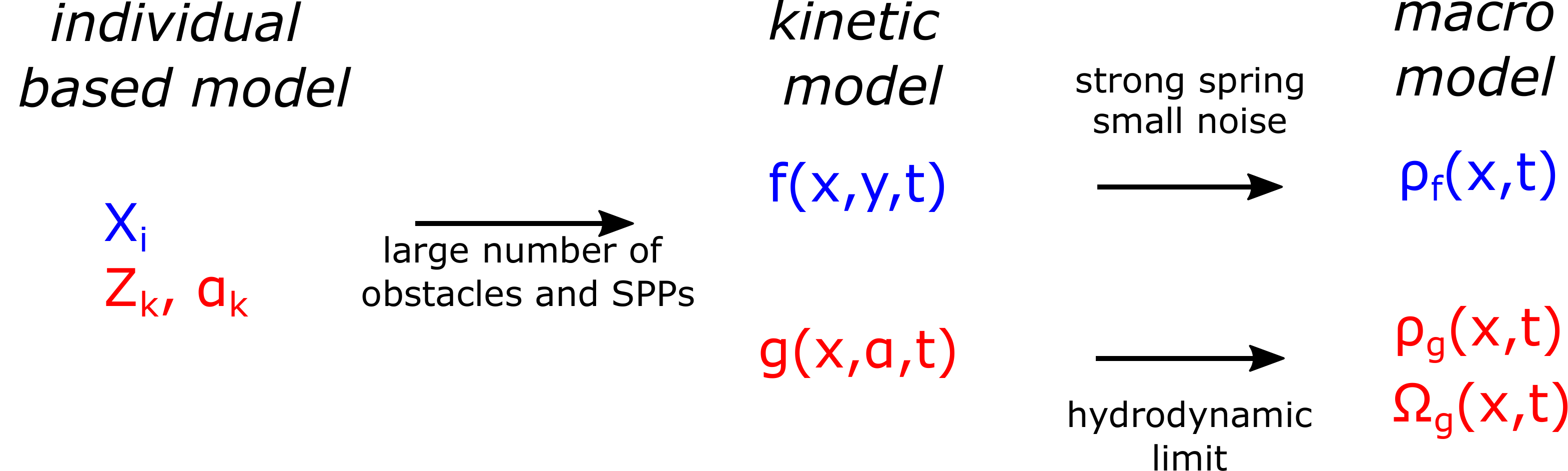}
\caption{\small{Derivation overview. Shown are the different levels of models used in this work and the connection between them. See text for further details.}}
\label{fig:deriv_overview}
\end{figure}

\subsection{The mean-field limit}
\label{ssec:meanField}

We start by defining $g( x, \alpha,t)$, the distribution of the SPPs at position $x \in \R^n$, time $t \geq 0$ with direction of the self-propelled velocity $\alpha \in \mathbb{S}^{n-1}$ and let $f( x, y, t)$ be the distribution of obstacles with position $x \in \R^n$, tethered at $y \in \R^n$ at time $t \geq 0$.\newline\par

We consider the empirical distribution associated with the dynamics of the SPPs and tethered obstacles given by system \eqref{eqn:IBMmodel_massless}.
\begin{align}
\label{eqn:empDist}
 g^M ( x, \alpha, t) & = \frac{ 1}{ M} \sum_{k=1}^M \delta_{Z_k(t)} ( x) \otimes \delta_{ \alpha_k(t)} ( \alpha) \, , \\
 f^N ( x, y, t) & = \frac{ 1}{ N} \sum_{i=1}^N \delta_{X_i(t)} ( x) \otimes \delta_{ Y_i(t)} ( y),\nonumber
\end{align}
where $\delta_{A}$ denotes the Dirac delta in $\R^n$ (for $A=X_i, Y_i, Z_k$) or in $\S^{n-1}$ (for $A=\alpha_k$) concentrated at $A$.
 
%%%%%%%%%%%%%%%%%%%%%
%% MEAN FIELD LIMIT
%%%%%%%%%%%%%%%%%%%

\begin{lemma}[Kinetic Model] Formally, as $N, M \rightarrow \infty$, $f^N \rightarrow f$ and $g^M \rightarrow g$, where the distributions $f(x,y,t)$ and $g(x,\alpha,t)$ fulfil the following Kolmogorov-Fokker-Planck equations
\begin{subequations}
\label{eqn:kinModelMassless}
%\begin{empheq}[box=\fbox]{align}
\begin{align}
& \dt f + \nabla_x \cdot \big( \mathcal{W} f \big) = d_o \Delta_x f, \label{eqn:kinModelMassless_f}\\
& \dt g + \nabla_x \cdot \big( \mathcal{U} g \big) +
\nabla_\alpha \cdot \bigg( P_{\alpha^\perp} \left[ \nu \bar{\alpha}_g \right] g \bigg) = d_s \Delta_\alpha g,\label{eqn:kinModelMassless_g}
\end{align}
%\end{empheq}
\end{subequations}
where
\begin{align}
\label{eqn:BarAlpha}
&\bar{ \alpha}_g(x,t) = \frac{ J_g ( x, t)}{| J_g ( x, t)|} \qquad \text{ with} \qquad J_g ( x, t) = \int_{|x-z|\leq r_A}\!\!\! \alpha\, g ( z, \alpha, t) \dd z \dd \alpha.
\end{align}
For the (space and time dependent) velocities we have
\begin{align}
\label{eqn:speeds}
\mathcal{W}&= - \frac{\kappa}{\eta} ( x - y) - \frac{1}{\eta} \nabla_x \bar{\rho}_g(x,t),\\
\mathcal{U} &=  \alpha - \frac{ 1}{ \zeta} \nabla_x \bar{\rho}_f(x,t)- \frac{ 1}{\zeta} \nabla_x \hat{\rho}_g(x,t),\nonumber
\end{align}
where we have introduced the densities of obstacles and SPPs
\begin{equation}
\label{eqn:densities}
\rho_g(x,t)=\int g ( x, \alpha, t) \dd \alpha,\qquad \rho_f(x,t)=\int f( x, y, t) \dd y,
\end{equation}
as well as an abbreviation for densities convoluted with kernels
\[
\bar \rho(x,t):=(\phi * \rho)(x,t), \qquad \hat \rho(x,t):=(\psi * \rho)(x,t).
\]
Further $f$ fulfils
\begin{equation}
\label{eqn:tethDens}
\int f(x,y,t) \dd x = \rho_A(y),
\end{equation}
where $\rho_A (y)$ is a given, time-independent function of obstacle anchor positions.
\end{lemma}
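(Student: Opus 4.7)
The plan is a classical McKean--Vlasov / mean-field derivation, carried out separately on each of the two populations with the coupling handled by passing to the limit inside the interaction sums. I would test the empirical measures $f^N$ and $g^M$ against smooth functions $\varphi(x,y)$ and $\chi(x,\alpha)$, apply It\^o's formula to the coupled SDEs \eqref{eqn:IBMmodel_massless}, and then average over $i$ (resp.\ $k$). Assuming propagation of chaos, so that $f^N\to f$ and $g^M\to g$ weakly and pointwise in the interaction kernels, the empirical sums in the drifts become integrals against the limit laws, the Brownian contributions give the diffusion operators with coefficients $d_o$ and $d_s$, and a final integration by parts in $x$ and $\alpha$ yields the divergence-form equations \eqref{eqn:kinModelMassless_f}--\eqref{eqn:kinModelMassless_g}.

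\textbf{Obstacle equation and tether condition.} For $f^N$, It\^o on $\varphi(X_i,Y_i)$ reproduces $\mathcal{W}$ directly: the elastic term $-\tfrac{\kappa}{\eta}(X_i-Y_i)$ is already in the claimed form, while the SPP--obstacle sum is recognised as a convolution against the empirical SPP density,
\[
\frac{1}{M}\sum_{k=1}^M \nabla\phi(X_i - Z_k) = \int \nabla\phi(X_i-z)\,\rho_{g^M}(z,t)\,dz \;\longrightarrow\; \nabla_x\bar\rho_g(X_i,t),
\]
and the Brownian term produces $d_o\Delta_x$ after integration by parts. Because no $y$-derivatives are present in the SDE for $Y_i$, the equation is purely transport--diffusion in $x$. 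The tether identity \eqref{eqn:tethDens} follows from the same observation: $\int f^N(x,y,t)\,dx = \tfrac{1}{N}\sum_i \delta_{Y_i}(y)$ is independent of $t$, and in the limit this empirical measure of anchors is $\rho_A(y)$.

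\textbf{SPP equation.} For $g^M$ the procedure is analogous but split across position and orientation. It\^o on $\chi(Z_k,\alpha_k)$ identifies the spatial drift with $\mathcal{U}$ via
\[
\frac{1}{N}\sum_i \nabla\phi(Z_k - X_i)\to \nabla_x\bar\rho_f(Z_k,t), \qquad \frac{1}{M}\sum_{l\neq k}\nabla\psi(Z_k-Z_l)\to \nabla_x\hat\rho_g(Z_k,t).
\]
The Stratonovich equation \eqref{eqn:IBMorient} for $\alpha_k$ contributes, through the projector $P_{\alpha_k^\perp}$, a drift $\nu P_{\alpha^\perp}\bar\alpha_g$ and an intrinsic Brownian motion on $\S^{n-1}$ whose generator is the spherical Laplacian $d_s\Delta_\alpha$. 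For the alignment direction, the unnormalised local flux in \eqref{eqn:meanDir} rewrites as $J_k = M \int_{|Z_k-z|\le r_A}\alpha\, g^M(z,\alpha,t)\,dz\,d\alpha$, and since $\bar\alpha = J_k/|J_k|$ is invariant under positive rescaling the factor $M$ cancels, yielding $\bar\alpha_g$ as defined in \eqref{eqn:BarAlpha} in the limit.

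\textbf{Main difficulty.} I expect the only nontrivial step to be the passage to the limit in the alignment drift: $\bar\alpha_g$ depends on $g$ through a normalisation by $|J_g|$, which is only continuous where $J_g\neq 0$ and is a discontinuous, non-local functional of $g$. A fully rigorous propagation-of-chaos argument would need to control this singularity as well as the interaction of the Stratonovich noise with the spherical constraint. For the formal derivation claimed in the lemma, the convergence $g^M\to g$ is simply substituted into the normalised flux; the smoothness and compact support of $\phi$ and $\psi$ make every other passage-to-the-limit step routine.
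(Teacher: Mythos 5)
Your proposal is correct and follows essentially the same route the paper gestures at: the paper's proof is a one-line appeal to ``standard techniques'' (test the empirical measures against observables, use It\^o's formula, pass formally to the limit in the interaction sums), and your proposal simply spells those standard steps out, including the correct observations that the tether marginal $\tfrac{1}{N}\sum_i\delta_{Y_i}$ is time-independent and that the unnormalised flux $J_k$ can be written via $g^M$ with the $M$-factor cancelling in $\bar\alpha=J/|J|$. Your remark that the normalisation $|J_g|$ makes $\bar\alpha_g$ a discontinuous functional of $g$ (and so blocks a rigorous propagation-of-chaos proof) is exactly the known obstruction for the Vicsek mean-field limit, and rightly identifies why the lemma is stated only at a formal level.
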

\begin{proof}
The limit is purely formal and uses standard techniques. We observe that $f^N$ and $g^M$ fulfil the equations for all $N$ and $M$ and then pass to the limit.
\end{proof}

\begin{remark}
 Note that since $f$ and $g$ are probabilities they also fulfil
\[
\int f(x,y,t) \dd x \dd y = \int g(x, \alpha, t) \dd x \dd \alpha \equiv 1,
\]
and consequently
\[
\int \rho_A (y) \dd y = 1.
\]
\end{remark}

\paragraph{Interpretation.}
At this point we have a system of coupled kinetic equations for the obstacle distribution $f(x,y,t)$ and the SPP distribution $g(x,\alpha,t)$. The interactions between the obstacles and the SPPs lead to the terms of the form $\nabla_x\bar\rho$ in the speeds $\mathcal{W}$ and $\mathcal{U}$ in \eqref{eqn:speeds}. An easy way to understand these terms is by assuming that the interaction force is of repulsive nature and purely local, in which case $\nabla_x\bar\rho=\nabla_x \rho$. We then see that the interaction force moves obstacles and SPPs in the opposite direction of the gradient of each other. The convolution with $\phi$ accounts for the potential non-locality of this interaction, which will be crucial later on. The remaining terms in $\mathcal{W}$ and $\mathcal{U}$ show the influence of the tethers and the self-propulsion for obstacles and the SPPs respectively. In $\mathcal{U}$ we also see the influence of SPP repulsion. The term involving $\nabla_\alpha$ in \eqref{eqn:kinModelMassless_g} reflects the effect of SPP alignment. The terms on the right-hand-side of \eqref{eqn:kinModelMassless} are results of the stochasticity in the obstacle position (for $f$) and in the SPP orientation (for $g$).

\subsection{Scaling assumptions}
\label{ssec:scaling}

To derive the macroscopic equations for the SPP-obstacle interactions we make a number of scaling assumptions for both the SPPs and the obstacles.

\paragraph{Scaling assumptions for the SPPs.} Following previous work \cite{Degond2008, Degond2015}, we introduce a small parameter $\eps$ and specify the relative order of the various terms. We mostly follow \cite{Degond2015}, with a few small differences: Firstly we assume the effect of alignment to be purely local, i.e. $r_A=\mathcal{O}(\eps)$, as has been done e.g. in \cite{Degond2008}. Alternatively one could choose a weakly non-local scaling $r_A=\mathcal{O}(\sqrt\eps)$, which would lead to an additional viscous term in the SPP orientation equation \eqref{eqn:macroSw_Omega} below. As in \cite{Degond2015} we also assume the SPP self-repulsion to be purely local, i.e. $r_R=\mathcal{O}(\eps)$ and assume that
$$
\int \psi(x)\dd x=:\mu<\infty.
$$
However we do not make any smallness assumption with regards to the SPP-obstacle interaction scale $r_I$. This is because we are interested in studying the effect of the non-locality of this interaction. Otherwise we proceed as in \cite{Degond2015}, i.e. assuming the alignment frequency $\nu$ and orientational diffusion $d_s$ to be of order $1/\eps$, and their ratio to be of order one.

\paragraph{Scaling assumptions for the obstacles.}

From \eqref{eqn:speeds} we see that it is only the macroscopic obstacle density $\rho_f(x,t)$ that enters the SPP equation. Unfortunately we cannot obtain a closed system for the macroscopic obstacle density $\rho_f(x,t)$ of $f$ by integrating \eqref{eqn:kinModelMassless_f}. Instead we make assumptions about the time scales of the obstacle dynamics. From now on we also assume to have a constant anchor density, i.e. $\rho_A \equiv 1$ is constant in space and time. We note that the results can be generalised to non-uniform $\rho_A$. We introduce the following quantities
\[
\gamma=\eta/\kappa,\qquad \delta =d_o \gamma.
\]
For the derivation we will assume both $\gamma$ and $\delta $ to be small. For $\gamma$ this means that the obstacle spring relaxation time scale is small compared to the SPP domain crossing time. We will sometimes refer to this assumption as `stiff obstacles', since it can be realized with a large spring constant $\kappa$. For $\delta $ smallness means that the obstacle spring relaxation time scale is small compared to the obstacle diffusion time scale, which we refer to as `low obstacle noise'.
Next we rewrite \eqref{eqn:kinModelMassless_f} as
\begin{align}
\label{eqn:kinModelMassless_f_limit}
\dt f + \nabla_x \cdot \left(\ve(x,t) f\right)=\frac{1}{\gamma} \mathcal{A}_y(f),
\end{align}
where we have defined the `external' velocity as
\begin{align}
\label{eqn:vext}
\ve(x,t)=-\frac{1}{\eta}\nabla_x\bar{\rho}_g(x,t)
\end{align}
and the operator $\mathcal{A}_y$ by
\begin{equation}
\label{eqn:operatorA}
\mathcal{A}_y(f):=\nabla_x \cdot \left[ (x-y)f + \delta  \nabla_x f \right].
\end{equation}
We can rewrite the operator as
\begin{align*}
\mathcal{A}_y(f)=\delta  \nabla_x \cdot \left[M_\delta (x-y)\nabla_x\left(\frac{f}{M_\delta (x-y)}\right)\right],
\end{align*}
where  $M_{\delta}(z)$ is a Gaussian with variance $\delta$ centred around 0, whose mass is normalized to one, i.e.
\begin{align}
\label{eqn:Gaussian}
M_{\delta}(z)=\frac{1}{Z_\delta}e^{-\frac{|z|^2}{2\delta}}, \qquad Z_\delta=(2\pi \delta)^{n/2}.
\end{align}
The above also shows that $M_\delta (x-y)$ is in the kernel of $\mathcal{A}_y$.
\begin{remark}
Note that the rescaling of the diffusion term $\delta =d_o\gamma$ ensures the operator $\mathcal{A}_y$ is a Fokker-Planck-type  operator. Without it, we would obtain $\mathcal{A}_y(f)=\nabla_x \cdot \left[ (x-y)f\right]$, whose kernel contains Dirac deltas, making the analysis much more tedious. Eventually, however, we are interested in the small noise limit. This, of course raises several questions, which are beyond the scope of this work, e.g. does the order of the the limits $\gamma \rightarrow 0$ and $\delta  \rightarrow 0$ matter?
\end{remark}

\subsection{The macroscopic SPP-obstacle equation}
\label{ssec:macroswimmer}

Using the scaling and notation above, we now state the main result of this section, which we prove in Sec.~\ref{ssec:proof}.

\begin{theorem}(SPP-Obstacle Macromodel)
\label{thm:macromodel}
Let $\rho_A\equiv 1$ be constant and $f(x,y,t)$ fulfill \eqref{eqn:kinModelMassless_f_limit} with $\gamma \ll 1$ and $\delta  \ll 1$. Further let $g^\eps(x,\alpha,t)$ be the solution of \eqref{eqn:kinModelMassless_g} using the scaling involving $\eps$ described above and let $g^0(x,\alpha,t)$ be its (formal) limit as $\eps\rightarrow 0$. Then it holds that
$$
g^0(x,\alpha,t)=\rho_g(x,t)N_{\Omega_g(x,t)}(\alpha),
$$
where  $N_{\Omega}$ is the von Mises-Fisher distribution defined by
$$
N_{\Omega}(\alpha)=\frac{1}{K_{d}}e^{\frac{\Omega \cdot \alpha}{d}},\qquad K_{d}=\int e^{\frac{\Omega \cdot \alpha}{d}} \dd \alpha, \qquad d=\frac{d_s}{\nu}, \qquad \text{for}\quad \Omega \in \mathbb{S}^{n-1}.
$$
Note that $K_d$ is a normalization constant and is independent of $\Omega$. Further the macroscopic SPP density $\rho_g(x,t)$ and the macroscopic SPP orientation $\Omega_g(x,t)$ fulfil
\begin{subequations}
\label{eqn:macroSwObs}
%\begin{empheq}[box=\fbox]{align}
\begin{align}
& \dt \rho_g + \nabla_x \cdot \left( U \rho_g \right) = 0,\label{eqn:macroSw_rho}\\
& \rho_g \dt \Omega_g + \rho_g \left(V \cdot \nabla_x\right)\Omega_g +  d P_{\Omega_g^\perp} \nabla_x \rho_g
= 0,\label{eqn:macroSw_Omega}\\
& U=c_1\Omega_g-\frac{1}{\zeta}\nabla_x \bar{\rho}_f-\frac{\mu}{\zeta}\nabla_x \rho_g, \quad V=c_2\Omega_g-\frac{1}{\zeta}\nabla_x \bar{\rho}_f-\frac{\mu}{\zeta}\nabla_x \rho_g,\label{eqn:macroSw_UV}
\end{align}
%\end{empheq}
\end{subequations}
The constants $c_1>0$ and $c_2>0$ depend only on $d=d_s/\nu$ and are defined as in \cite{Degond2015}. The macroscopic obstacle density $\rho_f(x,t)$ is given by
\begin{align}
\label{eqn:obsDensMax}
\rho_f(x,t)=1 &- \frac{\gamma}{\delta  \eta} \bigg [ \bar{\rho}_g(x)-\big [M_{2\delta }*\bar{\rho}_g\big](x) \bigg ] \nonumber \\
&-\frac{\gamma^2}{\eta}\pd_t \Delta_x \bar \rho_g +\frac{\gamma^2}{\eta^2}\mathcal{N}(\bar\rho_g)+\mathcal{O}(\gamma^2\delta )++\mathcal{O}(\gamma^3),
\end{align}
where the nonlinear term $\mathcal{N}$ is defined by
$$
\mathcal{N}(\bar\rho_g)=\frac{1}{2}\left[(\Delta_x \bar \rho_g)^2-\mathbb{H}(\bar\rho_g):\mathbb{H}(\bar\rho_g)\right] \, ,
$$
where $\mathbb{H}(\bar\rho_g)$ denotes the Hessian of the
function $\bar\rho_g$, i.e. $\{ \mathbb{H}(\bar\rho_g)\}_{i,j} = \partial_i \partial_j \bar\rho_g$, and
given two $n$ by $n$ matrices $\mathbb{A}$ and $\mathbb{B}$,
their scalar product is defined as $\mathbb{A}:\mathbb{B} = \sum_{i,j=1}^n A_{i,j} B_{i,j}$.
\end{theorem}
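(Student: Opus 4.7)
The theorem splits into two essentially independent parts that I would treat in parallel: the hydrodynamic limit $\eps\to 0$ for the SPPs, producing the von Mises ansatz and the evolution system for $(\rho_g,\Omega_g)$, and the joint $(\gamma,\delta)$ asymptotic expansion for the obstacles, producing the expression for $\rho_f$. The SPP half is in essence an application of the self-organised hydrodynamics (SOH) machinery of \cite{Degond2008,Degond2015}. Under the stated scaling $\nu,d_s\sim 1/\eps$ and $r_A,r_R\sim\eps$, the alignment integral and the repulsion convolution localise, with $\psi*\rho_g\to\mu\rho_g$, and \eqref{eqn:kinModelMassless_g} takes the canonical form $\dt g^\eps + \nabla_x\cdot(\mathcal{U}g^\eps)=\frac{1}{\eps}Q(g^\eps)$ for the alignment-diffusion operator $Q$ on the sphere. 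Passing to the limit forces $Q(g^0)=0$, whose equilibria are exactly the von Mises distributions $\rho_g N_{\Omega_g}$. Multiplying by the two generalised collision invariants from \cite{Degond2008} (the constant one for mass, the vector-valued GCI for orientation) and integrating yields \eqref{eqn:macroSw_rho}--\eqref{eqn:macroSw_UV} with the constants $c_1,c_2$ inherited from \cite{Degond2015}; the non-local term $\nabla_x\bar\rho_f$ is carried through unchanged because $r_I$ is not rescaled.

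For the obstacle half I would perform a Hilbert expansion $f = f_0 + \gamma f_1 + \gamma^2 f_2+\ldots$ directly in \eqref{eqn:kinModelMassless_f_limit}, enforcing $\int f_0\,\dd x=\rho_A\equiv 1$ and $\int f_k\,\dd x=0$ for $k\geq 1$. The $O(\gamma^{-1})$ equation $\mathcal{A}_y(f_0)=0$ selects $f_0(x,y,t)=M_\delta(x-y)$, giving $\rho_f^{(0)}\equiv 1$. At $O(\gamma^0)$ one has $\mathcal{A}_y(f_1) = \nabla_x\cdot(\tilde v\, M_\delta(x-y))$; using the self-adjoint form $\mathcal{A}_y(f)=\delta\nabla_x\cdot[M_\delta\nabla_x(f/M_\delta)]$ and exploiting the crucial fact that $\tilde v=-\nabla_x\bar\rho_g/\eta$ is a gradient, the inversion is explicit:
$$
 f_1(x,y,t) = \frac{M_\delta(x-y)}{\delta\eta}\bigl[(M_\delta*\bar\rho_g)(y,t)-\bar\rho_g(x,t)\bigr],
$$
with the additive function of $y$ fixed by the normalisation constraint. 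Integrating in $y$ and using the Gaussian identity $M_\delta*M_\delta=M_{2\delta}$ produces precisely the first-order correction $-\frac{\gamma}{\delta\eta}[\bar\rho_g-M_{2\delta}*\bar\rho_g]$ of \eqref{eqn:obsDensMax}.

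The main technical work is the $O(\gamma^2)$ term, governed by $\mathcal{A}_y(f_2)=\dt f_1 + \nabla_x\cdot(\tilde v f_1)$. The right-hand side is an $x$-divergence, so the solvability condition $\int\cdot\,\dd x=0$ is automatic, and the same self-adjoint inversion as above applies. I would then split the contribution to $\int f_2\,\dd y$ along the two terms on the right. The piece coming from $\dt f_1$ is linear in $\bar\rho_g$ with a single factor of $1/\eta$; after integrating in $y$ and expanding $M_{2\delta}*\bar\rho_g=\bar\rho_g+\delta\Delta_x\bar\rho_g+O(\delta^2)$, it collapses to $-\frac{\gamma^2}{\eta}\pd_t\Delta_x\bar\rho_g$ modulo $O(\gamma^2\delta)$. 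The piece from $\nabla_x\cdot(\tilde v f_1)$ is quadratic in $\bar\rho_g$ with an overall $1/\eta^2$; using the identity
$$
 (\Delta_x\phi)^2 - \mathbb{H}(\phi):\mathbb{H}(\phi) \;=\; \nabla_x\cdot\bigl(\Delta_x\phi\,\nabla_x\phi\bigr) - \nabla_x\cdot\bigl(\mathbb{H}(\phi)\nabla_x\phi\bigr)
$$
with $\phi=\bar\rho_g$ puts this contribution into precisely the symmetric form $\frac{\gamma^2}{\eta^2}\mathcal{N}(\bar\rho_g)$. The hard part I anticipate is bookkeeping: one must simultaneously track the $\gamma$- and $\delta$-hierarchies, verify that cross-terms land in the $O(\gamma^2\delta)$ and $O(\gamma^3)$ remainders rather than at leading order, and check that the order of the limits $\gamma\to 0$ and $\delta\to 0$ only enters through these subdominant errors (as flagged in the remark preceding Section \ref{ssec:macroswimmer}).
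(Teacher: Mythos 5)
Your framework matches the paper's: the SPP half is delegated to the SOH machinery of \cite{Degond2008,Degond2015} exactly as the paper does, and the obstacle half is a Hilbert expansion $f=f_0+\gamma f_1+\gamma^2 f_2+\ldots$ as in the paper's Theorem~\ref{thm:limit}. Your explicit inversion at first order, exploiting that $\ve=-\frac1\eta\nabla_x\bar\rho_g$ is a gradient, is precisely the paper's Lemma~\ref{lem:consVF}, and your identity
$(\Delta_x\phi)^2-\mathbb{H}(\phi):\mathbb{H}(\phi)=\nabla_x\cdot(\Delta_x\phi\,\nabla_x\phi)-\nabla_x\cdot(\mathbb{H}(\phi)\nabla_x\phi)$
is correct and indeed converts the divergence form of $\rho_{f_2}$ into the stated $\mathcal{N}(\bar\rho_g)$.

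However, there is a real gap at order $\gamma^2$. Your claim that ``the same self-adjoint inversion as above applies'' to $\mathcal{A}_y(f_2)=\partial_t f_1+\nabla_x\cdot(\ve f_1)$ is justified only for the second term: since $\ve f_1=\frac{M_\delta}{\delta\eta^2}[\,\bar\rho_g(x)-(M_\delta*\bar\rho_g)(y)\,]\nabla_x\bar\rho_g=\frac{M_\delta}{\delta\eta^2}\nabla_x\!\left(\tfrac12\bar\rho_g^2-(M_\delta*\bar\rho_g)(y)\,\bar\rho_g\right)$ is again $M_\delta$ times a gradient, the same ansatz $f\mapsto M_\delta\cdot(\text{potential}/\delta + c(y))$ works. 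But $\partial_t f_1=\frac{M_\delta}{\delta\eta}\big[(M_\delta*\partial_t\bar\rho_g)(y)-\partial_t\bar\rho_g(x)\big]$ is \emph{not} of the form $\nabla_x\cdot(M_\delta\nabla_x q)$ for any obvious $q$, so the gradient trick does not invert it. Indeed, your phrase ``after integrating in $y$ \ldots it collapses to $-\frac{\gamma^2}{\eta}\partial_t\Delta_x\bar\rho_g$'' cannot be taken at face value: integrating $\partial_t f_1$ itself in $y$ gives $\partial_t\rho_{f_1}\to+\frac1\eta\partial_t\Delta_x\bar\rho_g$, which has the \emph{opposite sign} to the stated answer. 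The sign flip is produced by the inversion $\mathcal{A}_y^{-1}$, and you have not carried it out. The paper does this by rescaling $\sigma=(x-y)/\sqrt\delta$, writing $f_r=\delta^{-r/2}M_\delta\,h_r(\sigma,y,t)$, expanding $h_r$ in $\sqrt\delta$, and inverting the Ornstein--Uhlenbeck generator $\mathcal{B}=\Delta_\sigma-\sigma\cdot\nabla_\sigma$ on the Hermite basis (Lemma~\ref{lem:FokkerPlanck}); the eigenvalue $-1$ of $\H_{e_k}$ is exactly where the sign on the $\partial_t$ term comes from. Without some systematic replacement for that step (or an alternative explicit formula for $\mathcal{A}_y^{-1}(\partial_t f_1)$, which is essentially $\mathcal{A}_y^{-2}$ applied to a divergence and is not elementary), the $\partial_t\Delta_x\bar\rho_g$ term in \eqref{eqn:obsDensMax} is asserted rather than derived. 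A smaller imprecision: you justify solvability by saying the right-hand side ``is an $x$-divergence''; $\partial_t f_1$ is not, but solvability still follows since $\int\partial_t f_1\,\dd x=\partial_t\int f_1\,\dd x=0$ by the normalisation constraint.
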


Eqs.~\eqref{eqn:macroSw_rho} and \eqref{eqn:macroSw_Omega} give the evolution for the particle density $\rho_g$ and mean orientation $\Omega_g$ respectively. Without the term $\nabla_x\bar\rho_f$ appearing in $U$ and $V$ in Eq.~\eqref{eqn:macroSw_UV} these equations correspond to the so-called Self-Organised Hydrodynamics with Repulsion (SOHR) and their derivation can be found in \cite{Degond2015}. The additional terms in Eq.~\eqref{eqn:macroSw_UV} account for the influence of the obstacles density $\rho_f$.\newline

The equation for the obstacle density, expanded in the small variables $\delta $ and $\gamma$ is given in \eqref{eqn:obsDensMax}. It is important to note that the obstacle density given in \eqref{eqn:obsDensMax} can in principle become negative, which is not physically meaningful. This is a consequence of the assumption that $\gamma$ is small and indicates that the validity of the model will be limited to certain parameter regimes.  We see that for infinitely strong springs, i.e. $\gamma\rightarrow 0$, $\rho_f(x,t)\equiv \rho_A\equiv 1$, i.e. obstacles remain exactly at their anchor points and since those are assumed to be uniformly distributed, the obstacles have no effect on the SPPs ($\nabla_x\bar\rho_f\equiv 0$). For small, but finite $\gamma$ the feedback from the SPPs leads to non-uniform obstacles. 

\paragraph{Influence of obstacle noise.} The influence of the obstacle noise $\delta $ is contained in the order $\gamma$ term in \eqref{eqn:obsDensMax}. We note that
\begin{align*}
- \frac{1}{\delta  \eta} \bigg [ \bar{\rho}_g(x)-\left(M_{2\delta }*\bar{\rho}_g\right)(x) \bigg ]\rightarrow \frac{1}{\eta}\Delta_x \bar{\rho}_g(x)\qquad \text{as}\quad \delta \rightarrow 0.
\end{align*}
We see that the noise adds an additional form of non-locality. Whether the obstacle density is reduced or increased depends on whether $\bar{\rho}_g$, the convoluted SPP density at $x$ is smaller or larger than the `blurred', convoluted SPP density $\bar{\rho}_g$, where the amount of blurring depends on the obstacle noise. In the absence of obstacle noise \eqref{eqn:obsDensMax} simplifies to
\begin{align}
\label{eqn:obsDens_noNoise}
\rho_f(x,t)=1 +\frac{\gamma}{\eta}\Delta_x \bar{\rho}_g(x)-\frac{\gamma^2}{\eta}\pd_t \Delta_x \bar \rho_g +\frac{\gamma^2}{\eta^2}\mathcal{N}(\bar\rho_g)+\mathcal{O}(\gamma^3).
\end{align}

\paragraph{SPP dynamics deform obstacle volume elements.} 

In the absence of obstacle noise we can rewrite \eqref{eqn:obsDens_noNoise} as
\begin{align}
\label{eqn:densObsDet}
\rho_{f}(x)&=\det{J_Y}-\frac{\gamma^2}{\eta}\pd_t \Delta_x \bar \rho_g+\mathcal{O}(\gamma^3),
\end{align}
where $J_Y$ is the Jacobian of the map
\begin{align*}
&Y(x,t)=x+\frac{\gamma}{\eta}\nabla_x\bar{\rho}_g(x,t).
\end{align*}
The map $Y$ can be interpreted as an estimate of the anchor position of an obstacle at position $x$ moved under the influence of the SPP density. Then the determinant of the Jacobian reflects the deformation of a volume element of obstacles due to the SPPs. Note that for $n=3$ $\det{J_Y}$ contains also order $\gamma^3$ terms, for $n=2$ only order $\gamma^2$ terms and lower.

\paragraph{Higher order terms account for SPP movement.} Finally we comment on the time derivative appearing in \eqref{eqn:obsDensMax}. The time derivative leads to a form of delay, i.e. the obstacles retain a memory of where SPPs were. This can be seem by Taylor expanding the SPP density in time using the time scale of obstacle relaxation $\gamma$. Then the linear terms in \eqref{eqn:obsDens_noNoise} can be written as
\begin{align*}
\frac{\gamma}{\eta}\Delta_x\left(\bar\rho_g-\gamma\pd_t\bar\rho_g\right)=\frac{\gamma}{\eta}\Delta_x \bar\rho_g(x,t-\gamma)+\mathcal{O}(\gamma^2).
\end{align*}

Finally in preparation for the analytical and numerical investigation of Sec.~\ref{sec:1dAnalysis} and Sec.~\ref{sec:numerics} we state the following:

%%%%%%%%%
%% COR 1D
%%%%%%%%%

\begin{corollary}[1D equations.] Let the assumptions of Thm.~\ref{thm:macromodel} hold. Then for $n=1$, the equations for the SPP density $\rho_g(x,t)$ and the obstacle density $\rho_f(x,t)$ with $x\in \mathbb{R}$ and $t\geq 0$ are given by
\begin{align}
\label{eqn:macroSw_rho_1D}
&\pd_t\rho_g+c_1\pd_x \rho_g=\frac{1}{\zeta}\left(\mu \rho_g\pd_x\rho_g+\rho_g\pd_x \bar\rho_f\right),
\end{align}
where we have assumed all particles move to the right. The obstacle density up to order $\gamma^2$ is given by
\begin{align}
\label{eqn:obsDensMax_1D}
\rho_f(x,t)=1 &- \frac{\gamma}{\delta  \eta} \bigg [ \bar{\rho}_g(x)-\big [M_{2\delta }*\bar{\rho}_g\big](x) \bigg ]-\frac{\gamma^2}{\eta}\pd_t \pd_x^2 \bar \rho_g.
\end{align}
For $\delta \rightarrow 0$ and using only terms up to order $\gamma$, \eqref{eqn:obsDensMax_1D} simplifies to
\begin{align}
\label{eqn:obsDensSimple_1D}
\rho_f(x,t)=1 + \frac{\gamma}{\eta}\pd_x^2\bar\rho_g.%-\frac{\gamma^2}{\eta}\pd_t \pd_x^2 \bar \rho_g+\mathcal{O}(\gamma^3).
\end{align}
\end{corollary}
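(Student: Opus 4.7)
The strategy is to carry out a direct specialization of Theorem~\ref{thm:macromodel} to $n=1$, combined with the small-noise expansion already indicated in the discussion preceding the corollary. Nothing new needs to be derived; the work consists of collapsing the general-dimensional expressions using the special structure of 1D.

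First, in 1D we have $\mathbb{S}^0 = \{-1,+1\}$, and the hypothesis that all particles move to the right fixes $\Omega_g(x,t) \equiv +1$. The orthogonal projection $P_{\Omega_g^\perp}$ is then onto the zero subspace, so the orientation equation \eqref{eqn:macroSw_Omega} is automatically satisfied and carries no content. Specializing \eqref{eqn:macroSw_UV} gives
\[
U = c_1 - \frac{1}{\zeta}\pd_x\bar\rho_f - \frac{\mu}{\zeta}\pd_x\rho_g,
\]
and substituting into the scalar conservation law \eqref{eqn:macroSw_rho} reproduces \eqref{eqn:macroSw_rho_1D} after distributing the derivative in the flux and regrouping.

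For the obstacle density I would specialize \eqref{eqn:obsDensMax} to $n=1$. The key observation — which is the only non-tautological input — is that in one space dimension the Hessian $\mathbb{H}(\bar\rho_g)$ has a single scalar entry, so
\[
\mathbb{H}(\bar\rho_g):\mathbb{H}(\bar\rho_g) = (\pd_x^2\bar\rho_g)^2 = (\Delta_x\bar\rho_g)^2,
\]
and the nonlinear correction $\mathcal{N}(\bar\rho_g)$ vanishes identically. Replacing $\Delta_x$ by $\pd_x^2$ and dropping the higher-order remainders $\mathcal{O}(\gamma^2\delta)$ and $\mathcal{O}(\gamma^3)$ leaves exactly \eqref{eqn:obsDensMax_1D}.

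Finally, to obtain \eqref{eqn:obsDensSimple_1D}, I would invoke the limit
\[
-\frac{1}{\delta\eta}\bigl[\bar\rho_g(x) - (M_{2\delta}*\bar\rho_g)(x)\bigr] \longrightarrow \frac{1}{\eta}\pd_x^2\bar\rho_g \quad\text{as } \delta\to 0,
\]
which is a second-order Taylor expansion of $\bar\rho_g$ inside the Gaussian convolution (the zeroth-order term cancels, the first-order term vanishes by symmetry of $M_{2\delta}$, and the second-order term produces $\pd_x^2\bar\rho_g$ since $M_{2\delta}$ has variance $2\delta$). Dropping the $\mathcal{O}(\gamma^2)$ time-derivative contribution then yields \eqref{eqn:obsDensSimple_1D}. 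There is no real obstacle in the proof — it is genuinely a dimensional-reduction argument — and the mildest subtlety is simply observing that $\mathcal{N}$ is identically zero when the Hessian is a $1\times 1$ matrix.
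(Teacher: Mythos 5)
The paper gives no explicit proof of this corollary; it is stated as an immediate specialization of Theorem~\ref{thm:macromodel}, and your approach — fixing $\Omega_g \equiv +1$, collapsing the projection $P_{\Omega_g^\perp}$ to zero, substituting the 1D velocity $U$ into the conservation law, and observing that the nonlinear correction $\mathcal{N}(\bar\rho_g)$ vanishes because the Hessian is a scalar — is exactly the argument the authors intend. Each step checks out, including the Taylor expansion of the Gaussian convolution for the $\delta\to 0$ limit.

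One point worth flagging explicitly: carrying out the substitution carefully gives
\[
\pd_t\rho_g + c_1\pd_x\rho_g = \frac{1}{\zeta}\,\pd_x\!\left(\mu\rho_g\pd_x\rho_g + \rho_g\pd_x\bar\rho_f\right),
\]
i.e.\ with an outer $\pd_x$ on the right-hand side, which is absent from \eqref{eqn:macroSw_rho_1D} as printed. That the outer derivative is required is corroborated both by the linearized form \eqref{eqn:swimmer1D_lin}, whose right-hand side involves second derivatives $\pd_x^2\rho_g$ and $\pd_x^2\bar\rho_f$, and by the gradient-flow rewriting \eqref{eqn:macro_gradientFlow}. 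So your derivation is correct and in fact reveals a typographical omission in the statement of the corollary; you should not claim the substitution ``reproduces'' the printed equation, but rather that it produces its conservative form, of which the printed version is a misprint.
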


\subsection{Proof of Theorem 1}
\label{ssec:proof}

For the coarse-graining of the kinetic SPP equation \eqref{eqn:kinModelMassless_g} we refer to previous work \cite{Degond2008, Degond2015}. We note that the obstacle density enters the SPP equation solely through its macroscopic density $\rho_f(x,t)$ via the interaction operator $\nabla_x\bar{\rho}_f$, which has a structure analogous to the SPP self-repulsion term, hence analogous techniques can be applied. \newline

%%%%%%%%%%%%%%%
%% THEOREM
%%%%%%%%%%%%%%%
To derive an expression for the obstacle density $\rho_f(x,t)$, we formulate and proof the following Theorem:

\begin{theorem}
\label{thm:limit}
Let $\rho_A\equiv 1$ and $f(x,y,t)$ fulfil \eqref{eqn:kinModelMassless_f_limit} with $\mathcal{A}_y(f)$ defined in \eqref{eqn:operatorA}. Let $\gamma \ll 1$ and expand $f(x,y,t)$ as
\begin{equation}
f(x,y,t)=f_0(x,y,t)+\gamma f_1(x,y,t)+\gamma^2 f_2(x,y,t)+\mathcal{O}(\gamma^3).
\end{equation}
Then the macroscopic densities defined by 
\begin{equation}
\rho_{f_i}(x,t)=\int f_i(x,y,t) \dd y
\end{equation}
satisfy
\begin{align}
\label{eqn:0thMoment}
\rho_{f_0}(x)&=1\\
\rho_{f_1}(x)&=-{\rm div} (\ve)-\delta \frac{1}{2}\Delta_x  {\rm div} (\ve)+ \mathcal{O}(\delta ^2),\nonumber\\
\rho_{f_2}(x)&=\frac{1}{2}\nabla_x \cdot \left[\ve\, {\rm div}{(\ve)}-(\ve \cdot \nabla_x)\ve\right]+\pd_t {\rm div}{(\ve)}+ \mathcal{O}(\delta ),\nonumber
\end{align}
as $\delta \rightarrow 0$.
\end{theorem}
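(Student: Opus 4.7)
The plan is a Hilbert-type expansion of $f$ in the small parameter $\gamma$. Substituting $f = f_0 + \gamma f_1 + \gamma^2 f_2 + \mathcal{O}(\gamma^3)$ into the rescaled equation \eqref{eqn:kinModelMassless_f_limit} and matching powers of $\gamma$ produces the cascade $\mathcal{A}_y(f_0) = 0$ and $\mathcal{A}_y(f_n) = \partial_t f_{n-1} + \nabla_x\cdot(\tilde v f_{n-1})$ for $n \geq 1$. The side constraint $\int f(x,y,t)\, dx = \rho_A(y) \equiv 1$ must hold at each order, so $\int f_0\, dx = 1$ and $\int f_n\, dx = 0$ for $n \geq 1$. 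Writing $\mathcal{A}_y$ in the symmetric form $\mathcal{A}_y(h) = \delta \nabla_x\cdot[M_\delta(x-y) \nabla_x(h/M_\delta(x-y))]$, the equation $\mathcal{A}_y f_0 = 0$ has a one-dimensional kernel in $x$ (for each fixed $y$), spanned by $M_\delta(x-y)$, so the normalization forces $f_0(x,y,t) = M_\delta(x-y)$ and hence $\rho_{f_0}(x,t) = 1$.

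At order $\gamma^0$, using $\partial_t f_0 = 0$, the equation becomes $\mathcal{A}_y f_1 = \nabla_x\cdot(\tilde v M_\delta)$. I would invert $\mathcal{A}_y$ via the ansatz $f_1 = M_\delta(x-y) H_1(x,y,t)$, reducing to the weighted Poisson problem $\delta \nabla_x\cdot(M_\delta \nabla_x H_1) = \nabla_x\cdot(\tilde v M_\delta)$. The key observation is that $\tilde v = -\nabla_x\bar\rho_g/\eta$ is already a gradient, $\tilde v = \nabla_x \Phi$ with $\Phi = -\bar\rho_g/\eta$, so one can take $\nabla_x H_1 = \tilde v/\delta$, yielding $H_1(x,y,t) = \Phi(x,t)/\delta + G_1(y,t)$. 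The additive $y$-function is fixed by the normalization $\int f_1\, dx = 0$, giving $G_1(y,t) = -(M_\delta * \Phi)(y,t)/\delta$. Integrating in $y$ and using $M_\delta * M_\delta = M_{2\delta}$ produces $\rho_{f_1}(x,t) = [\Phi - M_{2\delta} * \Phi]/\delta$; expanding $M_{2\delta} *$ as $e^{\delta \Delta_x}$ in Fourier then gives the claimed $\rho_{f_1} = -\mathrm{div}(\tilde v) - (\delta/2)\Delta_x\, \mathrm{div}(\tilde v) + \mathcal{O}(\delta^2)$.

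At order $\gamma^1$, apply the same ansatz $f_2 = M_\delta(x-y) H_2$, reducing to $\delta \nabla_x\cdot(M_\delta \nabla_x H_2) = \partial_t f_1 + \nabla_x\cdot(\tilde v f_1)$. Substituting $f_1 = M_\delta H_1$ with the explicit $H_1$ above, the source decomposes into a time-derivative contribution $M_\delta \partial_t H_1$, a convection contribution $H_1 \nabla_x\cdot(\tilde v M_\delta)$, and a quadratic contribution $M_\delta |\tilde v|^2/\delta$ arising from $\tilde v\cdot\nabla_x H_1 = |\tilde v|^2/\delta$. Solve for $H_2$ order-by-order in $\sqrt\delta$, exploiting that $M_\delta(x-y)$ concentrates on scale $\sqrt\delta$ so that $x$-dependent functions may be Taylor-expanded around $y$, fix the remaining $y$-function via $\int f_2\, dx = 0$, and integrate in $y$ retaining only the $\delta$-independent contribution. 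The $\partial_t$ piece is expected to yield $\partial_t\, \mathrm{div}(\tilde v)$, while the convective and quadratic pieces combine (after integration by parts) into the nonlinear bracket $\tfrac{1}{2}\nabla_x\cdot[\tilde v\, \mathrm{div}(\tilde v) - (\tilde v\cdot\nabla_x)\tilde v]$, producing the claimed $\rho_{f_2}$. A useful consistency check is the identity $\nabla_x\cdot[\tilde v\, \mathrm{div}\tilde v - (\tilde v\cdot\nabla_x)\tilde v] = (\mathrm{div}\, \tilde v)^2 - \mathbb{H}(\Phi):\mathbb{H}(\Phi)$ which, under $\Phi = -\bar\rho_g/\eta$, recovers the nonlinear term $\mathcal{N}(\bar\rho_g)$ of \eqref{eqn:obsDensMax}.

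The hard part will be this order-$\gamma^1$ inversion: unlike at order $\gamma^0$, the source $R_1 = \partial_t f_1 + \nabla_x\cdot(\tilde v f_1)$ is not a pure $x$-gradient, so the clean first-order reduction $\nabla_x H = (\text{vector field})/\delta$ no longer applies directly. One must carry out a careful small-$\delta$ asymptotic analysis of the weighted Poisson equation for $H_2$, track the cancellations between the $\partial_t f_1$ and convective contributions, and use an orthogonality/uniqueness argument (based on $\int f_2\, dx = 0$ and decay at infinity) to confirm that any non-gradient residuals in $H_2$ do not survive the $y$-integration at leading order in $\delta$.
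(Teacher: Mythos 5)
Your treatment of $\rho_{f_0}$ and $\rho_{f_1}$ is correct and in fact coincides exactly with one of the paper's two arguments for $f_1$: the paper proves (in Lemma \ref{lem:consVF}) the very identity $f_1(x,y) = M_\delta(x-y)\,\frac{1}{\delta}\left[V(x)-(M_\delta * V)(y)\right]$ by exploiting that $\ve = \nabla_x V$ is conservative and fixing the kernel element via $\int f_1\,\dd x = 0$, then uses $M_\delta * M_\delta = M_{2\delta}$ exactly as you do. Your consistency check $\nabla_x\cdot\left[\ve\,\Div(\ve) - (\ve\cdot\nabla_x)\ve\right] = (\Div\ve)^2 - \mathbb{H}(V):\mathbb{H}(V)$ for $\ve=\nabla_x V$ is also correct.

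The genuine gap is in the order-$\gamma^2$ step, which you yourself flag as "the hard part" but do not actually carry out. Your plan — ansatz $f_2 = M_\delta H_2$, weighted Poisson equation, Taylor expansion around $y$ on scale $\sqrt\delta$, fix the $y$-dependent constant by $\int f_2\,\dd x=0$ — is a correct description of what must happen, but "solve for $H_2$ order-by-order in $\sqrt\delta$" and "track the cancellations" are not constructive without identifying the algebraic structure that makes the inversion explicit. The paper's missing idea, which you don't supply, is to change variables to $\sigma=(x-y)/\sqrt\delta$ and factor $f_i = \delta^{-i/2} M_\delta(x-y)\, h_i(\sigma,y,t)$; under this rescaling $\mathcal{A}_y$ becomes (up to the overall factor) the Ornstein--Uhlenbeck generator $\mathcal{B}(h)=\Delta_\sigma h - \sigma\cdot\nabla_\sigma h$, whose eigenfunctions are the tensor Hermite polynomials $\H_{\bm{i}}$ with $\mathcal{B}\H_{\bm{i}} = -|\bm{i}|\H_{\bm{i}}$. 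Because the right-hand sides of the $h_2^j$ equations are low-degree polynomials in $\sigma$ (after the $\sqrt\delta$-expansion), the inversion is a finite linear-algebra computation in the Hermite basis, and the $y$-integration to get $\rho_{f_2}$ then reduces to applying the orthogonality relation $\langle \H_{\bm{i}},\H_{\bm{j}}\rangle_X = \bm{i}!\,\delta_{\bm{ij}}$ together with a parity observation that kills the $\mathcal{O}(\sqrt\delta)$ terms. Without this machinery, nothing in your sketch controls the coefficients in the $h_2^0,h_2^1$ expansions, so the claimed cancellations producing $\frac{1}{2}\nabla_x\cdot[\ve\,\Div\ve - (\ve\cdot\nabla_x)\ve] + \pd_t\Div\ve$ remain unverified. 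In short: your $\rho_{f_0}$, $\rho_{f_1}$ are proved; your $\rho_{f_2}$ is a plausible target identified by a consistency check, not a derivation.
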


%%%%%%%%%%%%%%%%%
%% Begin proof
%%%%%%%%%%%%%%%%%%%
\begin{proof}
In the following we drop the $t$-dependence of most terms to increase readability. We obtain the following equations for the three highest orders of $\gamma$
\begin{subequations}
\begin{align}
    &\mathcal{A}_y(f_0)=0,\label{eqn:order0} \\
    & \mathcal{A}_y(f_1)=\dt f_0+\nabla_x \cdot (\ve(x)f_0),\label{eqn:order1}\\
    & \mathcal{A}_y(f_2)=\dt f_1 +\nabla_x \cdot (\ve(x)f_1).\label{eqn:order2}
\end{align}
\end{subequations}

Let us note that \eqref{eqn:order0}, \eqref{eqn:order1}, and \eqref{eqn:order2} can be recast as follows: Given a function $h$ 
find $\psi$ (in a suitable functional space) such that
\begin{equation}\label{eqn:fredholm}
\mathcal{A}_y( \psi) = h \, . 
\end{equation}
Due to the conservation of mass property of $\mathcal{A}_y$, i.e. $\int \mathcal{A}_y \dd x = 0$,
a necessary conditon to warranty the existence of a solution of \eqref{eqn:fredholm} is $\int h \dd x = 0$. 
It can be shown that the operator $\mathcal{A}_y$ has compact resolvent on a suitable functional space and its kernel is generated by $M_{\delta }(x-y)$, given in \eqref{eqn:Gaussian}. The most important properties of the Gaussian $M_\delta $, that we will use repeatedly, are
\[
\int M_{\delta}(z) \dd z=1, \qquad \int z M_{\delta}(z) \dd z=0, \quad \nabla_z M_{\delta}(z)=-\frac{z}{\delta}M_{\delta}(z).
\]
Hence, we can obtain a complete characterization of the solutions 
of \eqref{eqn:fredholm} via the Fredholm alternative, namely, for any function $h$ such that $\int h \dd x = 0$
there exists a unique solution $\psi$ up to an element of the kernel of $\mathcal{A}_y$. For a proof of this result consult \cite{AS2019}.\newline

Let us start by considering \eqref{eqn:order0}, we search for a solution $f_0$ such that $\int f_0 ( x, y) \dd x = 1$, hence,
according to the results obtained for \eqref{eqn:fredholm} the unique solution is given as
\begin{align}
\label{eqn:order0_sol}
f_0(x,y)= M_{\delta }(x-y),
\end{align}
where $M_{\delta }$ is defined in \eqref{eqn:Gaussian}. For the remaining two equations we require the 
following scaling condition to hold, which ensures that the average mass is one,
\begin{align}
\label{eqn:scaling}
   \int f_i(x,y,t)\dd x=0,\qquad i=1,2.
\end{align}

\paragraph{Step 1: Rescaling.}

 Next we define the functions $h_1(\sigma, y,t)$ and $h_2(\sigma, y,t)$ as
\begin{align*}
f_1(x,y,t)&=\frac{1}{\sqrt{\delta }}\, M_\delta (x-y)\,h_1\left(\frac{x-y}{\sqrt{\delta }},y,t\right),\\
f_2(x,y,t)&=\frac{1}{\delta }\, M_\delta (x-y)\,h_2\left(\frac{x-y}{\sqrt{\delta }},y,t\right).
\end{align*}
This turns \eqref{eqn:order1} and \eqref{eqn:order2} into equations for $h_1(\sigma,y,t)$ and $h_2(\sigma,y,t)$. 
Defining $\mathcal{B}$ as the operator
\begin{align}
\label{eqn:FokkerPlanck}
&\mathcal{B}(h)=\Delta_\sigma h-\sigma \cdot \nabla_\sigma h,
\end{align}
we obtain, after tedious but straightforward computations, the following relationships
\begin{align}
\label{eqn:hSystem}
\mathcal{B}(h_1)&=\sqrt{\delta }\,\Div(\ve)|_{y+\sqrt{\delta }\sigma}-\sigma\cdot \ve|_{y+\sqrt{\delta }\sigma},\\
\mathcal{B}(h_2)&=\sqrt{\delta }\left(\dt h_1 + h_1\,\Div(\ve)|_{y+\sqrt{\delta }\sigma}\right)+\ve|_{y+\sqrt{\delta }\sigma}\cdot (\nabla_\sigma h_1-\sigma\cdot h_1).\nonumber
\end{align}
We use the notation $\Div = \nabla_x \cdot$ for the divergence of a vector field. 
There are several advantages to this scaling: Firstly, the operator $\mathcal{B}$ is the generator
of the Ornstein-Uhlenbeck stochastic process (a consequence of using $\sigma=(x-y)/\sqrt{\delta }$) and we can use its well 
known properties directly without having to scale by $\delta $. Secondly, we have removed the 
Gaussian $M_{\delta }$ from the equation (it cancelled). 
Finally, additionally scaling $f_1$ and $f_2$ by $1/\sqrt{\delta }$ and $1/\delta $ respectively turns out to be the correct choice when calculating the densities.\newline\par
\noindent
Before we proceed to the next step, we need to collect a number of properties of $\mathcal{B}$, 
all of which are well known and stated in App.~\ref{ssec:FPO}.

\paragraph{Step 2: Expansion in terms of the obstacle noise $\delta $.}
The next step involves expansion of the right-hand-sides of \eqref{eqn:hSystem}, $h_1$ and $h_2$ with respect to $\delta $, i.e.
\begin{align*}
h_1(\sigma,y,t)&=h_1^0(\sigma,y,t)+\sqrt{\delta }h_1^1(\sigma,y,t)+\delta  h_1^2(\sigma,y,t)+\mathcal{O}(\delta ^{3/2}),\\
h_2(\sigma,y,t)&=h_2^0(\sigma,y,t)+\sqrt{\delta }h_2^1(\sigma,y,t)+\delta  h_2^2(\sigma,y,t)+\mathcal{O}(\delta ^{3/2}).
\end{align*}
This yields as equations for $h_1^0$, $h_1^1$ and $h_1^2$
\begin{align*}
\mathcal{B}(h_1^0)&=-\ve_k \sigma_k,\\
\mathcal{B}(h_1^1)&=\pd_i \ve_i- \sigma_k \sigma_i \pd_k \ve_i, \\
\mathcal{B}(h_1^2)&=\sigma_k \pd_{ki}\ve_i - \frac{1}{2}\sigma_k \sigma_i \sigma_j \pd_{ij}\ve_k.
\end{align*}
Note that we have used the Einstein's summation convention and that now $\ve$ and its derivatives are all evaluated at $(y,t)$. 
Here partial derivatives are understood to act on the spatial variable, i.e. $\pd_i \ve := \frac{\pd}{\pd y_i} \ve(y,t)$. The advantage of this procedure is the following: Now the right hand sides are low-order polynomials in $\sigma$ and since $\mathcal{B}$ only acts on $\sigma$, the equations can be solved explicitly by rewriting the right hand sides in terms of the Hermite basis and using P2 of Lem.~\ref{lem:FokkerPlanck} in App.~\ref{ssec:FPO}. This procedure yields the explicit solutions
\begin{align}
\label{eqn:h1sol}
h_1^0(\sigma,y,t)&=\ve_i\H_{e_i},\\
h_1^1(\sigma,y,t)&=\frac{1}{2}\pd_k \ve_j \H_{e_k+e_j},\nonumber \\
h_1^2(\sigma,y,t)&=\frac{1}{2}\left[\pd_{ii}\ve_k\, \H_{e_k}+\frac{1}{3}\pd_{ij}\ve_k\, \H_{e_k+e_i+e_j}\right],\nonumber
\end{align}
where $\H$ are the tensor Hermite polynomials defined in Lem.~\ref{lem:FokkerPlanck} in App.~\ref{ssec:FPO}. Note that $\ve$ and all its derivatives are evaluated at $(y,t)$ and $\H$ at $\sigma$.

\vspace{0.4cm}

\noindent As equations for $h_2^0$ and $h_2^1$ we obtain
\begin{align*}
\mathcal{B}(h_2^0)&=\ve \cdot \left(\nabla_\sigma h_1^0-\sigma h_1^0\right),\\
\mathcal{B}(h_2^1)&=\dt h_1^0+\pd_i \ve_i\,h_1^0 +(\sigma_k\pd_k\ve) \cdot \left(\nabla_\sigma h_1^0-\sigma h_1^0\right)+ \ve \cdot \left(\nabla_\sigma h_1^1-\sigma h_1^1\right).
\end{align*}
As above $\ve$ and its derivatives are all evaluated at $(y,t)$. Using the solutions for $h_1^0$, $h_1^1$ and $h_1^2$ given in \eqref{eqn:h1sol} we can solve the equations for $h_2^0$ and $h_2^1$ in the same fashion, yielding the explicit expressions
\begin{align}
\label{eqn:h2sol}
h_2^0(\sigma,y,t)&=\frac{1}{2}\ve_k\ve_j \H_{e_k+e_j},\\
h_2^1(\sigma,y,t)&=\left(-\dt\ve_k + \ve_i \pd_i \ve_k\right)\, \H_{e_k}+\frac{1}{2}\ve_i\pd_k\ve_j\, \H_{e_k+e_i+e_j}\nonumber.
\end{align}
Note that the solutions fulfil the scaling condition \eqref{eqn:scaling} since it holds that
\begin{align}
\label{eqn:scaling2}
   \int M_1(\sigma)h_i^j(\sigma,y,t)\dd \sigma=0,\quad i=1,2,\quad j=0,1,2.
\end{align}

%%%%%%%%%%%%%%%%%
%%  Macroscopic moments of the obstacle density
%%%%%%%%%%%%%%%%%%%%%%%%%%%

\paragraph{Step 3: Calculating the macroscopic moments of the obstacle density.}
With the preparation of the two steps above, the calculation of the obstacle densities
\begin{equation*}
\rho_{f_i}(x,t)=\int f_i(x,y,t) \dd y,
\end{equation*}
and consequently its contribution to the SPP equation becomes relatively simple. The procedure and calculations are described in App.~\ref{ssec:moments}. This yields \eqref{eqn:0thMoment} as claimed.
\end{proof}

%%%%%%%%%%%%%%%%%%%%%%
%% END PROOF THEOREM 1
%%%%%%%%%%%%%%%%%%%%%%

\paragraph{Explicit solution for $f_1$.}
\label{ssec:consVF}

%%%%%%%%%%%%%%%%%%%%%%%%%%%
%% LEMMA CONSERVATIVE FIELD
%%%%%%%%%%%%%%%%%%%%%%%%%%%

The above outlined procedure works for any given external velocity $\ve(x,t)$, i.e. it allows to include other influences as well. 
For example, in future work we plan to use the derivation strategy to include the description of a fluid in which the obstacles and 
SPPs are immersed in. However, for this model, we can use the fact that $\ve(x,t)$ is in fact a conservative vector field. This allows to solve the 1-st order equation \eqref{eqn:order1} for $f_1(x,y,t)$ directly. This is covered in the following lemma, where the $t$ dependence has been suppressed for notational convenience. 

\begin{lemma}
\label{lem:consVF}
Let $\ve(x)$ be a conservative vector field, i.e. there exists a scalar function $V(x)$, such that $\nabla_x V=\ve$, then we can write the solution to \eqref{eqn:order1} as
\begin{align*}
f_1(x,y)=M_{\delta }(x-y)\frac{1}{\delta }\left[V(x)-\left(M_{\delta }*V\right)(y)\right]
\end{align*}
\end{lemma}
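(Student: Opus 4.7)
}
The plan is to exploit the self-adjoint (or Fokker-Planck) form of $\mathcal{A}_y$ together with the hypothesis that $\ve = \nabla_x V$ to reduce \eqref{eqn:order1} to a divergence equation that admits an essentially explicit solution, and then fix the remaining free function of $y$ via the normalisation \eqref{eqn:scaling}.

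First I would observe that since $f_0(x,y) = M_\delta(x-y)$ is independent of time, the right hand side of \eqref{eqn:order1} reduces to $\nabla_x \cdot (\ve(x) M_\delta(x-y))$. Next I would rewrite the operator in its self-adjoint form already given in the text,
\[
\mathcal{A}_y(f_1) \;=\; \delta\,\nabla_x \cdot \!\left[M_\delta(x-y)\,\nabla_x\!\left(\frac{f_1}{M_\delta(x-y)}\right)\right],
\]
and substitute $\ve = \nabla_x V$ into the right hand side, to obtain
\[
\delta\,\nabla_x\!\cdot\!\left[M_\delta(x-y)\,\nabla_x\!\left(\tfrac{f_1}{M_\delta(x-y)}\right)\right] \;=\; \nabla_x\!\cdot\!\big[M_\delta(x-y)\,\nabla_x V(x)\big].
\]
This form suggests the natural ansatz $\displaystyle \frac{f_1(x,y)}{M_\delta(x-y)} = \frac{1}{\delta}\bigl[V(x) - c(y)\bigr]$ for some function $c(y)$, since then $\delta\,\nabla_x\bigl(f_1/M_\delta(x-y)\bigr) = \nabla_x V(x)$ and both sides of the displayed equation coincide identically (not merely after taking divergence).

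The remaining task is to determine $c(y)$. I would impose the scaling condition \eqref{eqn:scaling}, namely $\int f_1(x,y)\,\dd x = 0$, which yields
\[
\frac{1}{\delta}\int M_\delta(x-y)\,V(x)\,\dd x \;=\; \frac{c(y)}{\delta}\int M_\delta(x-y)\,\dd x \;=\; \frac{c(y)}{\delta},
\]
so that $c(y) = (M_\delta * V)(y)$, yielding exactly the claimed expression for $f_1$. Finally I would invoke the Fredholm-alternative discussion following \eqref{eqn:fredholm}: the compatibility condition $\int \nabla_x \cdot(\ve M_\delta(x-y))\,\dd x = 0$ holds trivially by the divergence theorem, and any two solutions differ by an element of $\ker \mathcal{A}_y = \mathrm{span}\{M_\delta(x-y)\}$, which is fixed uniquely by \eqref{eqn:scaling}; hence the formula is the unique admissible solution.

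I do not anticipate a serious technical obstacle here; the only subtlety is recognising that the conservative structure of $\ve$ lets one pass from a divergence equation to a pointwise equation, i.e.\ one does not just match divergences but matches the vector fields themselves, which is what makes the ansatz collapse the PDE to an algebraic identity modulo the kernel of $\mathcal{A}_y$.
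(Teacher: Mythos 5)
Your proposal is correct and is essentially the same argument as the paper's: in both cases the key observation is that the flux $(x-y)f_1+\delta\nabla_x f_1$ (equivalently $\delta M_\delta\nabla_x(f_1/M_\delta)$) matches $M_\delta\,\ve$ pointwise, not merely after taking divergences, and the free function $c(y)$ is then fixed by the normalisation $\int f_1\,\dd x=0$. The only difference is presentational --- you derive the formula via the self-adjoint form and an ansatz, while the paper states the formula and verifies it by direct calculation --- so there is nothing substantive to add.
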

\begin{proof} By direct calculation we see that
\begin{align*}
&(x-y)f_1+\delta  \nabla_x f_1 =  M_\delta  (x-y)\ve(x),
\end{align*}
which shows that $f_1$ is indeed a solution to \eqref{eqn:order1}. Finally we have to verify the normalization condition \eqref{eqn:scaling}
\begin{align*}
\int f_1(x,y) \dd x&=\frac{1}{\delta }\int M_{\delta }(x-y)\left[V(x)-\left(M_{\delta }*V\right)(y)\right]\dd x\\
&=\frac{1}{\delta }\left[\int M_{\delta }(x-y)V(x)\dd x-\left(M_{\delta }*V\right)(y)\right]=0,
\end{align*}
which finishes the proof.
\end{proof}

%%%%%%%%%%%%%%%%%%%%%%%%%%%
%% END LEMMA CONSERVATIVE FIELD
%%%%%%%%%%%%%%%%%%%%%%%%%%%

%%%%%%%%%%%%%%%%%%%%%%%%%%%%
%% SWIMMER OBSTACLE INTERACTIONS
%%%%%%%%%%%%%%%%%%%%%%%%%%%%%

The above Lemma is applicable for this model of SPP-obstacle interactions since we have that 
\[
\ve(x,t)=-\frac{1}{\eta}\nabla_x \bar{\rho}_g(x,t),
\]
i.e. we can use Lem.~\ref{lem:consVF} with $V(x,t)=-\frac{1}{\eta} \bar{\rho}_g(x,t)$. We consequently find
\begin{align*}
f_1(x,y,t)&=- M_{\delta }(x-y)\frac{1}{\delta  \eta}\left[\bar{\rho}_g(x,t)-(M_{\delta }*\bar{\rho}_g)(y,t)\right].
\end{align*}
From this we can calculate 
\begin{align}
\label{eqn:obstDens_nonLoc}
\rho_{f_1}(x,t)&=- \frac{1}{\delta  \eta} \bigg [ \bar{\rho}_g(x)-\left(M_{2\delta }*\bar{\rho}_g\right)(x) \bigg ].
\end{align}

\begin{remark}
Note that since
\begin{align*}
- \frac{1}{\delta  \eta} \bigg [ \bar{\rho}_g(x)-\left(M_{2\delta }*\bar{\rho}_g\right)(x) \bigg ]&=\frac{1}{\eta}\left[\Delta_x \bar{\rho}_g(x)+\frac{\delta }{2}\Delta_x^2 \bar{\rho}_g\right]+\mathcal{O}(\delta ^2),
\end{align*}
we see that this is consistent with \eqref{eqn:0thMoment}, but contains more information about the $\mathcal{O}(\delta ^2)$ term.
\end{remark}

\paragraph{The macroscopic obstacle density.}

Collecting the results of Thm.~\ref{thm:limit} and Lem.~\ref{lem:consVF} and using the definition of $\ve$ given in \eqref{eqn:vext} we find that the maximum order of approximation of the obstacle density we can now write is given in \eqref{eqn:obsDensMax} as claimed. This finishes the proof of Thm.~\ref{thm:macromodel}.

%    --------------
%   Analysis 
%    --------------

\section{Analytical insights from the 1D Macromodel.}
\label{sec:1dAnalysis}

In this section we analyse the macroscopic model derived in  Sec.~\ref{sec:derivation} further to gain insights into the SPP-obstacle interactions. In particular we use linear stability analysis to understand the onset of patterning and investigate how obstacles induce an effective SPP interaction.

\subsection{Linear Stability Analysis}
\label{ssec:stabAnalysis}

In this section we investigate pattern formation for the SPP-obstacle model. We work in one space dimension, i.e. we focus on the SPP density $\rho_g(x,t)$ and the obstacle density $\rho_f(x,t)$ for $x\in \mathbb{R}$ or and $t\geq 0$, whose dynamics are given by \eqref{eqn:macroSw_rho_1D} and \eqref{eqn:obsDensMax_1D}.\newline

Consider the steady state solution $\rho_g(x,t)=\rho_0>0$. Small perturbations of this solutions (called again $\rho_g$) then fulfil the linearised equation
\begin{align}
\label{eqn:swimmer1D_lin}
& \dt \rho_g + c_1 \pd_x \rho_g = \frac{\rho_0}{\zeta} \left( \mu \pd^2_x \rho_g+\pd^2_x \bar{\rho}_f  \right),
\end{align}
where $\rho_f$ is still given by \eqref{eqn:obsDensMax_1D}.\newline

The following propositions examines the growth or decay behaviour of perturbations of the constant steady state in dependence on their angular frequency and the resulting linear stability of the constant steady state. We consider the equation on the whole space $x\in \mathbb{R}$ and posed on an interval with periodic boundary conditions.

%%%%%%%%%%%
%% PROP LINEAR STABILITY
%%%%%%%%%%%%%%%%%%%%%%%%

\begin{proposition}[Linear stability]
\label{prp:linstab}
 Consider \eqref{eqn:swimmer1D_lin} coupled to \eqref{eqn:obsDensMax_1D} posed a) on $x\in \mathbb{R}$ and b) on $x\in[0,1]$ with periodic boundary conditions.\newline
(i) The system permits solutions of the form $\rho_g(x,t)=\rho e^{i k x + \alpha t}$, with $\rho\neq 0$, $\alpha \in \mathbb{C}$ and $k\in \mathbb{R}$ (case a) or $k\in 2\pi \mathbb{Z}$ (case b) where $\alpha$ and $k$ fulfil the following dispersion relation
\begin{align}
\label{eqn:dispersion}
\alpha(k) = -i \frac{k c_1}{1+\gamma^2\frac{\rho_0}{\eta\zeta}k^2 \hat\phi_k^2}+\frac{\rho_0}{\zeta}k^2 \frac{\frac{\gamma}{\eta\delta }\left(1-e^{-\delta  k^2}\right)\hat\phi_k^2-\mu}{1+\gamma^2\frac{\rho_0}{\eta\zeta}k^2 \hat \phi_k^2},
\end{align}
where $\hat \phi_k$ is the Fourier transform (case a) or Fourier coefficient (case b) of the kernel $\phi$, defined by
\begin{align*}
\hat \phi_k=\int e^{-ikx}\phi(x) \dd x,
\end{align*}
where the integration domain is understood to be $\mathbb{R}$ (case a) or $[0,1]$ (case b).\newline
(ii) The constant steady state $\rho_g(x,t)=\rho_0$ is linearly stable iff
\begin{align}
\label{eqn:linstab}
\max_{k\in K} \frac{1}{\delta }\left(1-e^{-\delta  k^2}\right)\hat\phi_k^2<\frac{\mu \eta}{\gamma},
\end{align}
where $K=\mathbb{R}$ (case a) or $K=2\pi\mathbb{Z}$ (case b).

\end{proposition}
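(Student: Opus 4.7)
The calculation is a standard Fourier-mode analysis, but one has to carry the convolutions through carefully. My approach is to substitute the Ansatz $\rho_g(x,t)=\rho\, e^{ikx+\alpha t}$ directly into the linearised SPP equation \eqref{eqn:swimmer1D_lin} together with the obstacle closure \eqref{eqn:obsDensMax_1D}, and reduce everything to an algebraic relation for $\alpha$. The two key Fourier identities I will invoke are: for any admissible $k$, $\phi * e^{ikx}=\hat\phi_k\, e^{ikx}$, and $M_{2\delta}*e^{ikx}=e^{-\delta k^2}e^{ikx}$ (the latter because $M_{2\delta}$ has variance $2\delta$). Note that since $\phi$ is even (as stated in Sec.~\ref{sec:IBM}), $\hat\phi_k$ is a real number, which matters later when splitting $\alpha$ into real and imaginary parts.

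\textbf{Step 1 (the perturbation of $\rho_f$).} Apply $\bar\rho_g=\phi*\rho_g$ to the Ansatz to get $\bar\rho_g=\hat\phi_k\,\rho_g$. Plug this into \eqref{eqn:obsDensMax_1D}: the bracketed term becomes $(1-e^{-\delta k^2})\hat\phi_k\,\rho_g$ using the Gaussian identity above, and $\partial_t\partial_x^2\bar\rho_g$ becomes $-k^2\alpha\,\hat\phi_k\,\rho_g$. Then compute $\bar\rho_f=\phi*\rho_f$ (producing an extra factor $\hat\phi_k$, hence $\hat\phi_k^2$) and finally $\partial_x^2\bar\rho_f$ (producing a $-k^2$). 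The result expresses $\partial_x^2\bar\rho_f$ as an explicit multiple of $\rho_g$ that depends linearly on $\alpha$.

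\textbf{Step 2 (the dispersion relation).} Insert the Ansatz and the expression from Step 1 into \eqref{eqn:swimmer1D_lin}. Every term is now a numerical multiple of $\rho_g$, so one may divide through by $\rho_g$. Collect the $\alpha$-terms (they come from $\partial_t\rho_g$ on the left and from the $\partial_t\partial_x^2\bar\rho_g$ contribution on the right, both weighted by products of $\hat\phi_k$), then solve for $\alpha$. This produces a single fraction of the form given in \eqref{eqn:dispersion}: the imaginary part $-ikc_1/(\text{denominator})$ arises solely from the transport term $c_1\partial_x\rho_g$, while the real part arises from the combination of SPP self-repulsion ($-\mu$) and the obstacle-mediated term ($(\gamma/(\eta\delta))(1-e^{-\delta k^2})\hat\phi_k^2$). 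The only difference between the two boundary settings is that $k$ ranges over $\R$ versus $2\pi\Z$, and $\hat\phi_k$ is interpreted as Fourier transform or Fourier coefficient; the algebra is identical.

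\textbf{Step 3 (stability criterion).} Because $\hat\phi_k\in\R$ and the denominator $1+\gamma^2(\rho_0/\eta\zeta)k^2\hat\phi_k^2>0$, the real and imaginary parts of $\alpha(k)$ separate cleanly: $\mathrm{Im}\,\alpha$ contributes only an oscillatory factor and is irrelevant for stability, while
\[
\mathrm{Re}\,\alpha(k)=\frac{\rho_0 k^2}{\zeta}\cdot\frac{\frac{\gamma}{\eta\delta}(1-e^{-\delta k^2})\hat\phi_k^2-\mu}{1+\gamma^2\frac{\rho_0}{\eta\zeta}k^2\hat\phi_k^2}
\]
has the sign of its numerator. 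Linear stability of the constant state requires $\mathrm{Re}\,\alpha(k)\le 0$ for every non-zero admissible $k$; dividing by the positive constants $\mu\eta/\gamma$ immediately yields the criterion \eqref{eqn:linstab}. The $k=0$ mode corresponds to the conserved total mass and is neutral, so it plays no role. The main (minor) subtlety is keeping track of the two successive convolutions with $\phi$ that produce the factor $\hat\phi_k^2$, and handling the Gaussian convolution via its closed-form Fourier transform; everything else is bookkeeping.
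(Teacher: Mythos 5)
Your proposal follows essentially the same route as the paper: substitute the Fourier ansatz (equivalently, Fourier-transform the linearised PDE), use $\widehat{\phi*f}=\hat\phi_k\hat f$ and $\widehat{M_{2\delta}}=e^{-\delta k^2}$ to express $\hat\rho_f$ in terms of $\hat\rho_g$ and $\alpha$, substitute back into the $\hat\rho_g$ equation, and solve the resulting algebraic relation for $\alpha$; your explicit remark that $\hat\phi_k$ is real (since $\phi$ is even), which cleanly justifies reading off $\Re\alpha$, is a useful detail the paper leaves implicit. One thing you would notice if you carried Step~2 through explicitly: the $\alpha$-dependent piece of $\rho_f$ comes from $\partial_t\partial_x^2\bar\rho_g$ (carrying $-k^2\hat\phi_k$) and then passes through a further $\partial_x^2\bar\rho_f=\partial_x^2(\phi*\rho_f)$ in \eqref{eqn:swimmer1D_lin} (picking up another $-k^2\hat\phi_k$), so the denominator works out to $1+\gamma^2\tfrac{\rho_0}{\eta\zeta}k^4\hat\phi_k^2$ rather than the $k^2$ printed in \eqref{eqn:dispersion}; this appears to be a typo in the statement and is immaterial for part~(ii) because the denominator remains positive either way.
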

\begin{proof}
We show the proof for case a, case b can be shown analogously.
(i) Substituting the ansatz $\rho_g(x,t)=\rho e^{k i x + \alpha t}$ into \eqref{eqn:swimmer1D_lin} is equivalent to applying the Fourier transform to the whole equation. We use the following properties of the Fourier transform
\begin{align*}
\widehat{f*g}=\hat f \hat g,\qquad \widehat{\pd_x f}=i k\hat f, \qquad \widehat{M_\delta }=e^{-\frac{\delta }{2}k^2},
\end{align*}
and obtain an equation for $\hat \rho_g(k,t)=\int e^{-k i x}\rho_g(x,t) \dd x$.
\begin{align*}
\pd_t \hat \rho_g=-i k  c_1 \hat \rho_g -\frac{\rho_0}{\zeta}k^2 \left(\mu \hat \rho_g + \hat \phi_k \hat \rho_f\right).
\end{align*}
For the Fourier transform of $\rho_f$ we obtain
\begin{align*}
\hat \rho_f(k,t)=\tilde \delta(k)-\frac{\gamma}{\eta}\left(1-e^{-\delta  k^2}\right)\hat \phi_k \hat \rho_g+\frac{\gamma^2}{\eta}k^2 \hat \phi_k \pd_t \hat \rho_g,
\end{align*}
where $\tilde \delta(k)$ is the Dirac delta. Substituting $\hat\rho_f$ into the equation for $\hat \rho_g$ gives
$$
\pd_t \hat \rho_g(k,t)=\alpha(k)\hat \rho_g(k,t),
$$
with $\alpha(k)$ given in \eqref{eqn:dispersion} as claimed.\newline
(ii) We note that the decay or growth behaviour is determined by the sign of the real part of $\alpha(k)$. Since the denominator will always be positive, it is sufficient to examine the numerator. This gives the result.
\end{proof}

\begin{corollary}
\label{cor:linstab} Let $\rho_f$ be given only up to order $\gamma$ and let $\delta\rightarrow 0$. Then the real part of $\alpha(k)$ in Prop.~\ref{prp:linstab} becomes
\begin{align*}
%\label{eqn:dispersion}
\Re \alpha(k) =\frac{\rho_0}{\zeta}k^2 \left(\frac{\gamma}{\eta}(k\hat\phi_k)^2-\mu\right).
\end{align*}
\end{corollary}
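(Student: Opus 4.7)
The plan is to trace how the two stated simplifications (dropping the order $\gamma^2$ contribution to $\rho_f$, and sending $\delta\to 0$) affect the dispersion relation \eqref{eqn:dispersion}. The starting point is the derivation already carried out in the proof of Prop.~1: substituting the ansatz into the linearised equation and Fourier transforming gives
\[
\pd_t \hat\rho_g = -ikc_1\hat\rho_g - \frac{\rho_0}{\zeta}k^2\bigl(\mu \hat\rho_g + \hat\phi_k\, \hat\rho_f\bigr),
\]
while the Fourier transform of the expression for $\rho_f$ contributes an algebraic coupling to $\hat\rho_g$ that ultimately yields \eqref{eqn:dispersion}. The structure of \eqref{eqn:dispersion} is transparent: the denominator $1+\gamma^2\frac{\rho_0}{\eta\zeta}k^2\hat\phi_k^2$ arose entirely from the $-\frac{\gamma^2}{\eta}\pd_t\pd_x^2\bar\rho_g$ term in \eqref{eqn:obsDensMax_1D}, whereas the $\delta$-dependent factor $\frac{1}{\delta}(1-e^{-\delta k^2})\hat\phi_k^2$ came from the non-local $\bar\rho_g - M_{2\delta}*\bar\rho_g$ piece.

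First I would drop the $\gamma^2$ term in $\rho_f$, which is what "given only up to order $\gamma$" means. In Fourier space this amounts to replacing $\hat\rho_f(k,t)$ by $\tilde\delta(k) - \frac{\gamma}{\eta\delta}(1-e^{-\delta k^2})\hat\phi_k\, \hat\rho_g$ (no time-derivative coupling), so that the equation for $\hat\rho_g$ becomes a simple ODE $\pd_t\hat\rho_g = \alpha(k)\hat\rho_g$ with
\[
\alpha(k) = -ikc_1 + \frac{\rho_0}{\zeta}k^2\left(\frac{\gamma}{\eta\delta}(1-e^{-\delta k^2})\hat\phi_k^2 - \mu\right).
\]
Equivalently, one may simply set $\gamma^2\frac{\rho_0}{\eta\zeta}k^2\hat\phi_k^2 \to 0$ in the denominator of \eqref{eqn:dispersion} and keep the numerator as is.

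Next I would pass to $\delta\to 0$ in the remaining $\delta$-dependent factor via the expansion $1 - e^{-\delta k^2} = \delta k^2 + O(\delta^2)$, so that $\frac{1}{\delta}(1-e^{-\delta k^2}) \to k^2$. Since $\phi$ is even and real-valued, $\hat\phi_k$ is real, hence the only imaginary contribution to $\alpha(k)$ is the transport term $-ikc_1$. Taking real parts therefore gives
\[
\Re\alpha(k) = \frac{\rho_0}{\zeta}k^2\left(\frac{\gamma}{\eta}k^2\hat\phi_k^2 - \mu\right) = \frac{\rho_0}{\zeta}k^2\left(\frac{\gamma}{\eta}(k\hat\phi_k)^2 - \mu\right),
\]
which is the claimed expression. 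There is no genuine obstacle here; the only subtlety worth flagging is being explicit that "only up to order $\gamma$" is applied to $\rho_f$ (hence only to the denominator of \eqref{eqn:dispersion}) rather than to $\alpha(k)$ itself, since a naive expansion of $\alpha(k)$ in $\gamma$ would also discard the $\gamma$-linear numerator that carries all the pattern-forming information.
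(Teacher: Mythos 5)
Your proof is correct and takes the same (indeed, the only natural) route: the paper states Cor.~\ref{cor:linstab} without proof, and the implicit argument is exactly what you do---note that the $\gamma^2$ contribution to $\rho_f$ is what generated the denominator of \eqref{eqn:dispersion}, drop it, then use $\frac{1}{\delta}\bigl(1-e^{-\delta k^2}\bigr)\to k^2$ as $\delta\to 0$, and observe that $\hat\phi_k$ is real (since $\phi$ is real and even) so the imaginary part of $\alpha$ is carried entirely by the transport term $-ikc_1$. Your remark that ``up to order $\gamma$'' must be read as a truncation of $\rho_f$ rather than of $\alpha(k)$ itself is a correct and worthwhile clarification, as a naive $\gamma$-expansion of $\alpha$ would indeed produce the same leading expression but for a conceptually wrong reason.
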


\paragraph{Interpretation.} We interpret the results of Prop.~\ref{prp:linstab}(ii) as indication under what conditions patterning is expected. We start by observing that in the absence of obstacle noise, $\delta \rightarrow 0$, the linear stability condition \eqref{eqn:linstab} simplifies to
\begin{align*}
\max_{k} (k\hat\phi_k)^2<\frac{\mu \eta}{\gamma}.
\end{align*}
Since $\frac{1}{\delta }\left(1-e^{-\delta  k^2}\right)\leq k^2$, we observe that the obstacle noise $\delta>0$ has a stabilizing effect. The constant on the right-hand-side is critical for (in)stability. We see that SPP self-repulsion, strong obstacle springs and high obstacle friction stabilise the system. The order $\gamma^2$ approximation of the obstacle density leads to the additional terms in the denominator. It does not influence whether the constant steady state destabilises, however it decreases the growth or decay rate of the perturbations. The main determinant for pattern formation is the SPP-obstacle interaction kernel $\phi$ and the decay behaviour of its Fourier transform or coefficients. In case of purely local interactions $\hat \phi_k$ is constant and we see that the we have destabilisation for all parameter values, since for large frequencies the real part of $\alpha$ will always become positive. This emphasises the importance of the non-locality of the SPP-obstacle interactions. Next we look at a specific case.

\begin{example}
\label{ex:linStab} We assume $\delta\rightarrow 0$ and further consider the obstacle density only up to order $\gamma$. We work on $x\in[0,1]$ with periodic boundary conditions. Further we let the microscopic SPP-obstacle interaction kernel $\phi$ be compactly supported on the interval $[- r_I,  r_I]$ and yield a pushing force that decreases linearly with distance and is continuous at $ r_I$, i.e.
\begin{align*}
\phi(x)=\begin{cases}
C \frac{3}{2 r_I}\left(1-\frac{|x|}{ r_I}\right)^2 & \mbox {if } |x|< r_I \\
0 & \mbox{else}.\\
\end{cases}
\end{align*}
In this case we can calculate the Fourier coefficients explicitly and obtain
\begin{align*}
\hat \phi_k=6 C\frac{ r_I k -\sin{( r_I k)}}{( r_I k)^3}.
\end{align*}
The function
\begin{align*}
F(k)=(k\hat\phi_k)^2=\left(\frac{6 C}{ r_I}\right)^2\left(\frac{ r_I k -\sin{( r_I k)}}{( r_I k)^2}\right)^2
\end{align*}
attains its maximum at $k=\pi/ r_I$  and we hence find that if
\begin{align*}
\left(\frac{6 C}{\pi  r_I}\right)^2 < \frac{\mu \eta}{\gamma}.
\end{align*}
then the spatially constant steady state is linearly stable. The converse is in general not true, since $\pi/ r_I$ will typically not be in $2\pi\mathbb{Z}$. In the case of destabilisation, we expect the pattern size $P$ to be related to the maximum of $\Re \alpha(k)$, given in Cor.~\ref{cor:linstab}. We observe that $F(k)\rightarrow 0$ for $k \rightarrow \infty$ and hence $\Re \alpha(k)<0$ for $k$ sufficiently large. This means high-frequency perturbations will be damped by the diffusion-like SPP self-repulsion term. Since $\Re \alpha(0)=0$ there will typically be a well defined maximum attained at some $k=2\pi l_\text{max}$ with $l_\text{max}\in \mathbb{Z}$. We then expect that $P$ defined by 
$$P=\frac{1}{l_\text{max}}$$
will be a good indication of the expected pattern size. We numerically investigate whether this holds also far away from the constant steady state and for the IBM below in Sec.~\ref{ssec:useAnalysis}.
\end{example}

\subsection{Obstacle-induced SPP interaction}
\label{ssec:micromarcokernel}

In this section we show how properties of the interactions between SPPs and obstacles on the micro level inform the properties on the macro level and find some interesting connections to equations for granular flow, porous media and aggregation equations. We focus on the simplest case, where we assume the obstacle noise to be zero and consider the obstacle equation only until order $\gamma$. Further we work in one space dimension where many calculations can be done explicitly. Then the system of interest for the SPP density $\rho_g(x,t)$ and the obstacle density $\rho_f(x,t)$ for $x\in \mathbb{R}$ and $t\geq 0$ is given by \eqref{eqn:macroSw_rho_1D} coupled to \eqref{eqn:obsDensSimple_1D}.

\paragraph{A non-local equation with gradient flow structure.} If we substitute $\rho_f$ given in \eqref{eqn:obsDensSimple_1D} into the equation for $\rho_g$ given in \eqref{eqn:macroSw_rho_1D} we obtain
\begin{align}
\label{eqn:macro_gradientFlow}
& \dt \rho_g+c_1\pd_x \rho_g = \frac{1}{\zeta} \pd_x \left[ \rho_g \pd_x \left(\mu \rho_g+ \frac{\gamma}{\eta} \phi' *\phi'* \rho_g  \right)\right],
\end{align}
 We now see that we have a non-linear, non-local model with a gradient flow structure. These types of equations appear in a wide range of contexts ranging from granular flow, porous media and biological aggregation \cite{Otto2001, Topaz2006, Toscani2000} and their properties are subject of intense study \cite{Ambrosio2008, Benedetto1998, Carrillo2003}. The term stemming from the SPP self-repulsion is often written as $\mu \rho=H'(\rho)$, where $H(\rho)=\frac{\mu}{2}\rho^2$ is the SPP density of internal energy of the system. For the term stemming from the SPP-obstacle interaction, we can define the interaction kernel
\begin{align}
\label{eqn:W}
W(x)=( \phi' *\phi')(x).
\end{align}
Note that while $\phi$ is the microscopic interaction potential between SPPs and obstacles, $W$ can be interpreted as macroscopic obstacle-induced SPP interaction potential. 

\begin{figure}[t]
\centering
\includegraphics[width=\textwidth]{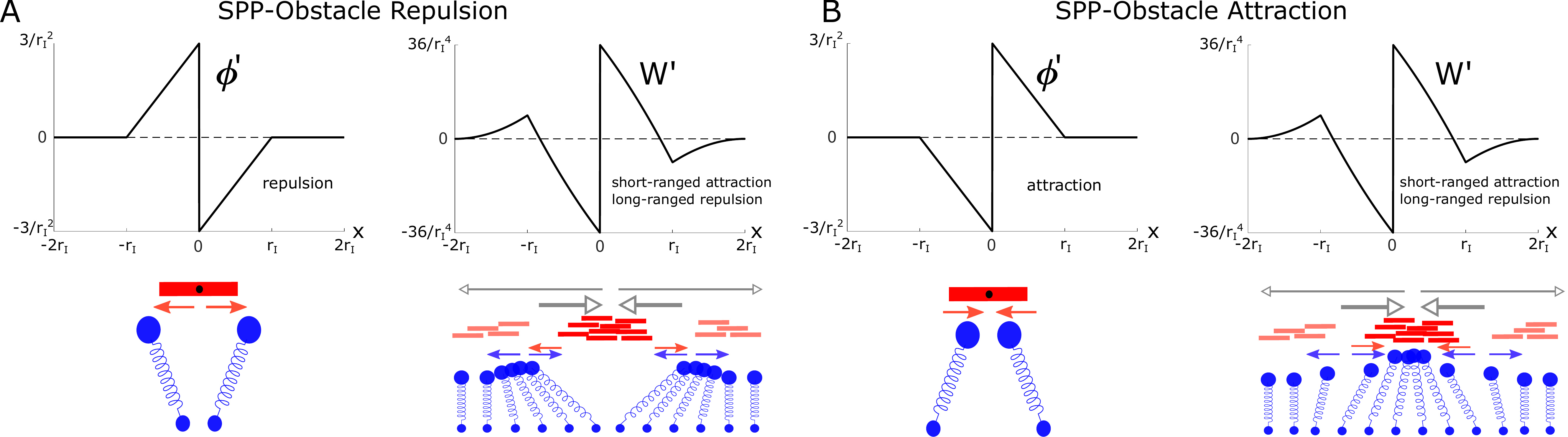}
\caption{\small{Micro-macro interactions. Shown are the microscopic SPP-obstacle interaction force $\phi'(x)$ and the resulting macroscopic interaction force $W'(x)$ for a 1D example, where $\phi(x)$ defined as in Ex.~\ref{ex:linStab}, for (A) $C=1$, i.e. repulsive and (B) $C=-1$, i.e. attractive. The schematic below illustrates the underlying interactions, where red and blue arrows mark the effect of the SPPs on the obstacles and vice versa respectively. Grey arrows show the net effect the group of SPPs in the center has on other SPPs.}}
\label{fig:SwObinterpret}
\end{figure}

\noindent\paragraph{Bi-phasic effect at the SPP level.}
We now infer properties of $W$ (the macro interaction potential) from properties of $\phi$ (the micro interaction potential). Note that $\phi'>0$ or $W'>0$ indicate forces to the left and $\phi'<0$ or $W'<0$ indicate forces to the right.

\begin{lemma}[Obstacle-induced SPP interactions]
\label{lem:micromacrokernel} Let $\phi(x)$ be an even potential. Then $\phi'$ is odd and we can define a function $\p$ on $[0,\infty)$ by
\begin{align}
\label{eqn:a}
\phi'(x)=\p(|x|)\,\sign(x),
\end{align}
using the convention that $\sign(0)=0$ and defining $\p(0):=\lim_{x\rightarrow 0^+}\p(x)$. Let $\p(0)\neq 0$ and $\p$ be continuous with bounded first derivative on $[0,\infty)$. We further assume that $\p$ has compact support on $[0,  r_I]$ for some $ r_I>0$, and that $\p$ and $\p'$ have constant but opposite sign on their support. Let $W$ be defined as in \eqref{eqn:W}. Then the following holds:
\begin{enumerate}
\item[(i)] $W$ is an even potential continuous on $\mathbb{R}$ and continuously differentiable on $\mathbb{R}\backslash\{0\}$. $W$ has compact support on $[-2 r_I, 2 r_I]$.
\item[(ii)] $W$ is an attractive potential for short distances, i.e. $W'(x)>0$ for $x>0$, $x$ small.
\item[(iii)] $W$ is a repulsive potential on $( r_I,2 r_I)$, i.e. $W'(x)<0$ for $x\in ( r_I,2 r_I)$.
\end{enumerate}
\end{lemma}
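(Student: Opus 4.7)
The strategy is to reduce the autoconvolution $W = \phi' * \phi'$ to explicit integrals of $\p$ over $[0, r_I]$ by splitting the integration domain according to the signs of $y$ and $x - y$, the two locations where the odd function $\phi'$ can be discontinuous. A direct change of variables on each sub-interval yields, for $x \in (0, r_I)$,
\[
W(x) = g(x) - 2 h(x), \quad g(x) := \int_0^x \p(y) \p(x-y) \dd y, \quad h(x) := \int_0^{r_I - x} \p(u) \p(x + u) \dd u,
\]
while for $x \in (r_I, 2 r_I)$ only the ``positive--positive'' block survives, giving $W(x) = \int_{x - r_I}^{r_I} \p(y) \p(x - y) \dd y$ with both $y$ and $x - y$ automatically in $(0, r_I)$. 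These two explicit formulas carry the whole argument.

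For part (i), evenness of $W$ follows from the oddness of $\phi'$ by the change of variables $y \mapsto -y$; the support is contained in the Minkowski sum of the supports of $\phi'$, namely $[-2 r_I, 2 r_I]$; continuity of $W$ is classical since $\phi'$ is bounded with compact support. For $C^1$ regularity on $\R \setminus \{0\}$, I would observe that for $x \neq 0$ the two potential jump points of the integrand $y \mapsto \phi'(y)\phi'(x-y)$, located at $y = 0$ and $y = x$, are distinct, so splitting the integral into three sub-intervals on which the integrand is $C^1$ and differentiating via Leibniz yields a $W'$ that depends continuously on $x$.

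The crucial step is part (ii): computing the right limit $W'(0^+)$. Differentiating $W = g - 2h$ gives
\[
g'(x) = \p(x)\p(0) + \int_0^x \p(y) \p'(x-y) \dd y, \qquad h'(x) = \int_0^{r_I - x} \p(u) \p'(x + u) \dd u,
\]
where the boundary term at the upper limit of $h$ vanishes because $\p(r_I) = 0$. Letting $x \downarrow 0$, the integral in $g'$ vanishes (shrinking domain, bounded integrand), while the integral in $h'$ tends to $\int_0^{r_I} \p\, \p' \dd u = \tfrac{1}{2}[\p^2]_0^{r_I} = -\tfrac{1}{2}\p(0)^2$. Hence $W'(0^+) = \p(0)^2 - 2 \cdot (-\tfrac{1}{2}\p(0)^2) = 2 \p(0)^2 > 0$. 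Combined with the continuity of $W'$ away from $0$ established in (i), this yields $W'(x) > 0$ on a right neighborhood of the origin, proving the short-range attraction.

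Part (iii) is then a short computation. On $(r_I, 2 r_I)$, Leibniz applied to the second representation gives $W'(x) = \int_{x - r_I}^{r_I} \p(y) \p'(x - y) \dd y$ (the boundary term at $y = x - r_I$ vanishes since $\p(r_I) = 0$); on this integration domain both $y$ and $x - y$ lie in $(0, r_I)$, and by hypothesis $\p$ and $\p'$ have opposite constant signs there, so the integrand is strictly negative on a non-empty open set and $W'(x) < 0$. The main obstacle is the jump of $\phi'$ at the origin: this jump is precisely what supplies the $2 \p(0)^2$ and hence the biphasic character of $W$, so one must keep one-sided limits carefully separated throughout the Leibniz computations in (i) and (ii); the remainder of the proof is essentially bookkeeping.
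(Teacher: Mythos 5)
Your proof is correct, and it reaches the same two key formulas as the paper — the one-sided limit $W'(0^+)=2\p(0)^2$ and the integral representation of $W'$ on $(r_I,2r_I)$ — but by a genuinely different technical route. The paper computes $W'=\phi''*\phi'$ in the distributional sense, noting that the jump of $\phi'$ at the origin produces the singular piece $\phi''(x)=\p'(|x|)+2\p(0)\delta(x)$; convolving with $\phi'$ then yields the closed-form expression
\[
W'(x)=2\p(|x|)\p(0)\sign(x)+\int\p'(|z|)\p(|x-z|)\sign(x-z)\,\dd z,
\]
from which (i)--(iii) are read off directly. You instead stay entirely at the level of ordinary integrals: you partition $\R$ according to the signs of $y$ and $x-y$ so that the jump never appears inside the integrand, obtain the explicit representations $W=g-2h$ on $(0,r_I)$ and the single ``positive--positive'' block on $(r_I,2r_I)$, and then differentiate by Leibniz with careful bookkeeping of boundary terms. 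What the paper's approach buys is compactness — one distributional identity carries the whole lemma, and the continuity of $W'$ on $\R\setminus\{0\}$ is immediate from the explicit formula. What your approach buys is elementariness — no Dirac deltas, and the origin of the $2\p(0)^2$ term becomes concretely visible as a Leibniz boundary contribution plus the cross term $\int_0^{r_I}\p\p'=-\tfrac12\p(0)^2$. Both are sound; yours is the route one would give to a reader uncomfortable with distributional calculus.

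One small point worth spelling out if you write this up: the vanishing of the boundary term $\p(r_I)=0$ is not listed as an explicit hypothesis but follows from $\p$ being continuous on $[0,\infty)$ with compact support in $[0,r_I]$; you use it twice (in $h'$ and in the Leibniz computation on $(r_I,2r_I)$), so it deserves a one-line justification.
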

\begin{proof}
(i): Since $W$ is the convolution of two compactly supported, bounded functions, $W$ is continuous. Using the definition of $W$ and that $\phi'$ is odd, we calculate
\begin{align*}
W(-x)=&\int \phi'(y)\phi'(-x-y)\dd y\\
=&\int \phi'(-y)\phi'(-x+y)\dd y = \int \phi'(y)\phi'(x-y)\dd y=W(x).
\end{align*}
Using \eqref{eqn:a} we calculate $\phi''(x)=\p'(|x|)+2\p(0)\delta(x)$, where $\delta$ is the Dirac delta. We therefore obtain
\begin{align}
\label{eqn:dW}
W'(x)=&\,( \phi'' *\phi')(x)\nonumber\\
=&2\,\p(|x|)\p(0)\sign(x)+\int \p'(|z|)\p(|x-z|)\sign(x-z)\dd z.
\end{align}
The second term is continuous in $x$, since it is the convolution of two compactly supported functions, both bounded, in particular it is zero if evaluated at $x=0$ due to symmetry. The first term is continuous on $\mathbb{R}\backslash\{0\}$ hence the same is true for $W'$. That $W$ is compactly supported on $[-2 r_I,2 r_I]$ is a consequence of the support of $\phi'$.\newline
(ii): Using \eqref{eqn:dW} we find that
$$
\lim_{x\rightarrow 0^+}W'(x)=2 (\p(0))^2>0,
$$
which together with the results of (i), shows that $W'(x)>0$ for small, but positive $x$. This shows that $W$ is an attractive potential for small distances.\newline
(iii): Let $x\in( r_I, 2 r_I)$. Using \eqref{eqn:dW} we find that
\begin{align}
W'(x)=\int_{x- r_I}^{ r_I} \p'(z)\p(x-z)\dd z.
\end{align}
By assumption, the product of $\p'$ and $\p$ is negative, which shows that $W$ is an repulsive potential at distances between $ r_I$ and $2 r_I$. This finishes the proof.
\end{proof}

\begin{example}[Micro-macro potentials]
\label{ex:micromacrokernel} We illustrate the results of the above Lemma with two examples of SPP-obstacle potentials. Using the notation introduced in \eqref{eqn:a} we consider for $r\in[0, \infty)$
\begin{align*}
\p_1(r)=C \frac{3}{ r_I^2}\left(1-\frac{r}{ r_I}\right)H( r_I-r),\quad \p_2(r)=C \frac{1}{2r_I^2} e^{-r/r_I},
\end{align*}
where $H$ is the Heaviside function, $ r_I>0$. $\pm C>0$ corresponding to attractive and repulsive SPP-obstacle interactions respectively. The function $\p_1$ corresponds to the potential of Ex.~\ref{ex:linStab}, which is compactly supported and covered by Lem.~\ref{lem:micromacrokernel}, while $\p_2$ corresponds to a kernel without compact support. Fig.~\ref{fig:SwObinterpret}A and B shows the resulting obstacle-induced SPP forces $W_1'$ for $\p_1$ for $C=-1$ and $C=1$ respectively. For $\p_2$ we can see the bi-phasic behaviour directly by calculating
$$
W_2'(x)=\frac{1}{4r_I^5}e^{-\frac{|x|}{r_I}}\left(2r_I-|x|\right)\text{sign}(x),
$$
showing that $W_2$ is an attractive potential for $|x|<2r_I$ and repulsive otherwise. Note that for both examples the sign of $C$ doesn't affect the shape of $W'$.
\end{example}

Lem.~\ref{lem:micromacrokernel} and Ex.~\ref{ex:micromacrokernel} show that the SPP-obstacle interactions will have a short-ranged attractive effect on SPP level, irrespective of whether the micro interaction was attractive or repulsive. This can be understood intuitively, see Fig.~\ref{fig:SwObinterpret}: If the SPPs and obstacles repel each other, the obstacles that have been repelled by a group of SPPs, will in turn repel other SPPs and therefore lead to further aggregation of the SPPs (Fig.~\ref{fig:SwObinterpret}A). On the other hand if the SPPs attract the obstacles, the obstacles attracted by a group of SPPs will attract even more SPPs, again leading to an aggregation effect on the SPP level (Fig.~\ref{fig:SwObinterpret}B). Further Lem.~\ref{lem:micromacrokernel} shows that if the SPP-obstacle interaction force (whether attractive or repulsive) is falling with distance, we see that in addition to the short-ranged attraction, we have a long-ranged repulsion at the SPP level as well. The second example in Ex.~\ref{ex:micromacrokernel} suggests that this property is not limited to compactly supported functions and that Lem.~\ref{lem:micromacrokernel} can be generalized to a bigger class to kernels.\newline

These observations already give a good intuition to understand the phenomena observed in Sec.~\ref{ssec:IBMsim}. For both the moving clusters and the travelling bands, the 1D equations (along the global alignment direction) would correspond to moving aggregates of SPPs. Both the moving clusters and the travelling bands seems to have controlled size, in particular we observed that a clusters that is too big is split into two. The above observations now give an explanation for the observed behaviour: The SPP-obstacle interaction leads to short-ranged SPP attraction and hence aggregation, however, due to the two sources of repulsion (SPP self-repulsion and obstacle-induced repulsion), clusters cannot grow too large. Next we perform 1D simulations to compare the macro model with the IBM.

%    ---------------------------------
%    1D Model Simulations 
%    ---------------------------------

\section{Numerical results in 1D}
\label{sec:numerics}

In this section we numerically solve the 1D macro model for SPP-obstacle interactions and compare the results to both 1D IBM simulations and the analytical results of Sec.~\ref{sec:1dAnalysis}. Simulation details can be found in App.~\ref{ssec:simDetails1D}.

\subsection{Comparing SOH and IBM simulations}
\label{ssec:IBMvsSOH}

\begin{figure}[p]
\centering
\includegraphics[width=\textwidth]{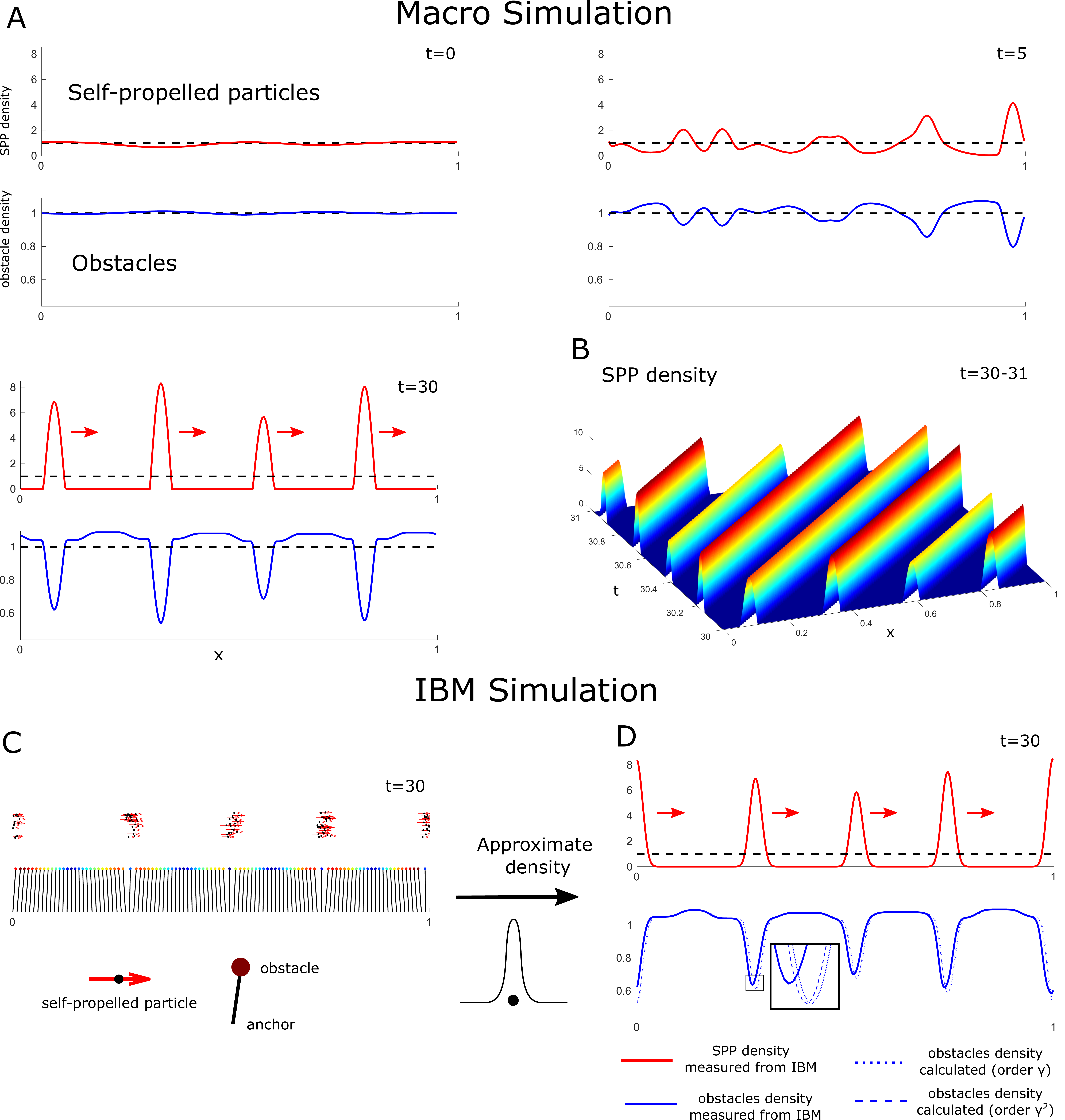}
\caption{\small{Simulations of the 1D macro model and IBM. A,D: Depicted are snapshots of numerical solutions showing the SPP (red, upper rows) and obstacle (blue, lower rows) densities, as well as their (constant) means (dashed black). A,B: Simulations of the macro model \eqref{eqn:macroSw_rho_1D}, \eqref{eqn:obsDensSimple_1D}. B: Space-time plot during one time unit of the continuation of the simulation in A. C,D: Simulations of the 1D IBM. C: Particle $x$-positions of the SPPs (red arrows with black dots) and obstacles (coloured circles, colour indicated displacement), $y$-positions are arbitrary. D: Approximated and calculated IBM particle densities of the simulation in C, see text for details.}}
\label{fig:sims1D}
\end{figure}

\paragraph{The macro SPP obstacle model produces travelling bumps.} We simulate \eqref{eqn:macroSw_rho_1D} coupled to \eqref{eqn:obsDensSimple_1D} in 1D using periodic boundary conditions on $x\in[0,1]$ and the following parameter choices: $\eta=1$, $c_1=1$, $\zeta=8$, $\gamma=2\times 10^{-3}$ and $\mu=5 \times 10^{-4}$. We use a linear microscopic interaction force, i.e. the interaction kernel as defined Ex.~\ref{ex:linStab} with $ r_I=0.18$ and $C=0.25$. As initial conditions we use a perturbed uniform SPP density. Fig.~\ref{fig:sims1D}A shows that, indeed, moving clusters of SPPs develop, with stretches of zero density between them. The clusters seem to be relatively evenly spread. The corresponding obstacle density is minimal where the SPP density is maximal. After the clusters have been established, we inspect the space-time plot for one time unit Fig.~\ref{fig:sims1D}B, which shows that they appear to be stably moving travelling waves of about speed one.

\paragraph{The macro SPP obstacle model agrees with the IBM.} Next we compare to 1D IBM simulations of \eqref{eqn:IBMmodel_massless}. Note that in 1D we can disregard the orientation equation and assume all particles self-propel to the right. We use the same parameters as for the macro model with $N=M=100$ and a self-repulsion kernel yielding a linear force, dropping with distance of width $r_R=0.02$. As initial conditions we use equally spaced anchor points and randomly positioned SPPs. Fig.~\ref{fig:sims1D}C shows the obstacles, their tether points and the SPPs at time $t=30$. We calculate the corresponding SPP and obstacle densities from the particle positions. To that end we create a smoothed version of the empirical distribution defined analogous to \eqref{eqn:empDist}, where the Dirac delta distributions have been replaced by 1D-Gaussians with variance $1\times 10^{-4}$. Note that choice of the variance is delicate, since it has to be small enough to be able to resolve the patterns and big enough to lead to meaningful averaging. The result is shown Fig.~\ref{fig:sims1D}D. A comparison between the simulated SPP and obstacle densities for the macro model and IBM shows remarkable good agreement both qualitatively and quantitatively.

\paragraph{Higher order approximations lead to a delay effect.} The macro model was simulated using an order $\gamma$ approximation for the obstacles. To assess the effect of the order $\gamma^2$ terms without solving the full system, we proceed as follows: We substitute the measured IBM SPP density depicted in Fig.~\ref{fig:sims1D}D into \eqref{eqn:obsDensMax_1D} (with $\delta =0$) to calculate the obstacle density as predicted by the model. We calculate both the order $\gamma$ and order $\gamma^2$ approximations. For the latter we need the time derivative of the SPP density, which we approximate by calculating the SPP density at the previous time step and using a forward finite difference approximation. The resulting densities are shown in Fig.~\ref{fig:sims1D}D. We observe that measured and calculated obstacle densities agree remarkably well. Inspecting the inset in Fig.~\ref{fig:sims1D}D, we see that the order $\gamma$ approximation predicts the obstacle density minima to be precisely at the SPP density maxima, however both the order $\gamma^2$ approximation and the actual measured IBM obstacle density have their local minima shifted backwards with respect to the SPP direction, yielding a better fit between the measured and calculated order $\gamma^2$ densities that those of the order $\gamma$ approximation. This demonstrates that the derived obstacle equation allows to calculate the obstacle density for a given SPP density. It also shows that the higher order approximation in $\gamma$ is necessary if one wants to account for effects of SPP movement.

\begin{figure}[t]
\centering
\includegraphics[width=0.75\textwidth]{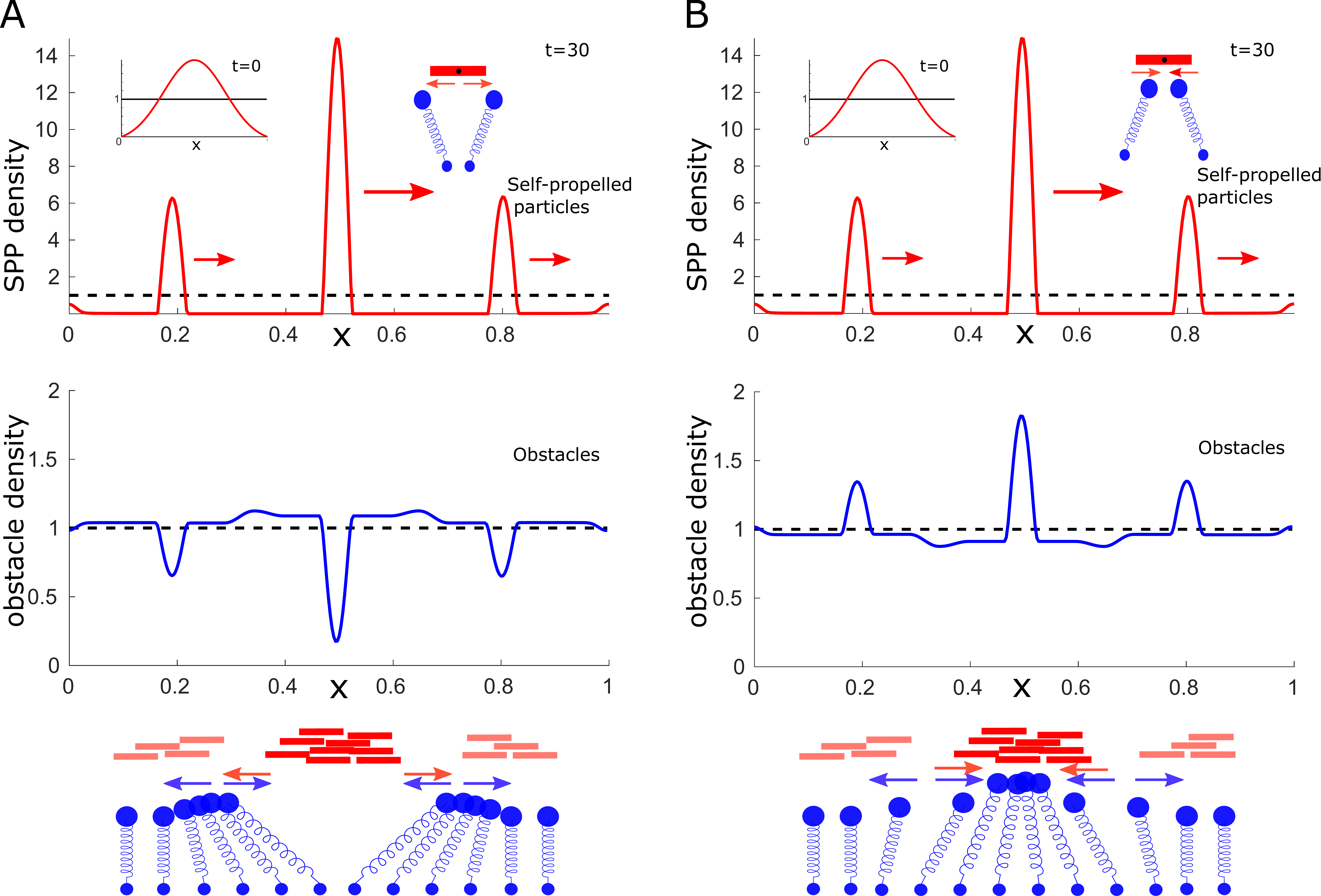}
\caption{\small{Simulations of the 1D macro model. A,B: Depicted are snapshots of numerical solutions to \eqref{eqn:macroSw_rho_1D}, \eqref{eqn:obsDensSimple_1D} showing the SPP (red, upper rows) and obstacle (blue, lower rows) densities, as well as their (constant) means (dashed black). Insets show initial condition, schematic depicts nature of microscopic interaction, red arrows indicate movement direction of the densities. Schematics in lower row: see Fig.~\ref{fig:SwObinterpret}.}}
\label{fig:sims1D_att_rep}
\end{figure}

\subsection{Testing analytical insights}
\label{ssec:useAnalysis}

\paragraph{Attractive and repulsive interactions lead to the same SPP behaviour.} In the next numerical experiment, shown in Fig.~\ref{fig:sims1D_att_rep} we use as initial condition a centrally placed Gaussian and inspect the moving steady state density for a repulsive (A) and an attractive (B) microscopic interaction force. We see that in both cases the resulting SPP density is the same, forming a travelling wave with a stable shape. This shape consists of a large cluster and two smaller clusters to its left and right. However, the obstacle density differs in the two cases: For an attractive potential we have obstacles clusters coinciding with the SPP clusters, whilst for the repulsive potential SPP clusters create regions of low obstacle density. The lower row compares this with the intuitive explanation of the previous section (see Fig.~\ref{fig:SwObinterpret}).

\paragraph{Linear stability analysis predicts macro and IBM patterns.} In Sec.~\ref{ssec:stabAnalysis} we performed a linear stability analysis for the 1D macro equation. In Ex.~\ref{ex:linStab} we determined the criteria for pattern formation and how to predict pattern size for a specific interaction potential shape. Now we compare these predictions to simulations of both the 1D macro equations \eqref{eqn:macroSw_rho_1D}, \eqref{eqn:obsDensSimple_1D} and the 1D IBM simulations by varying the size of the support of the interaction kernel $r_I$. We use the same kernels and number of particles as above and the following parameters: $\eta=1$, $c_1=1$, $\zeta=8$, $\gamma=2\times 10^{-3}$ and $\mu=6.7 \times 10^{-3}$, $C=0.17$. We start with a randomly perturbed constant initial density for the macro model and regularly spaced anchors and randomly placed SPPs for the IBM. We compare the predicted number of peaks as calculated in Ex.~\ref{ex:linStab} (and defined as the reciprocal of the pattern size) to the observed number of peaks at time $t=30$. The result is shown in Fig.~\ref{fig:stabAna_SOH_IBM}. We find that the analytical predictions of Sec.~\ref{ssec:stabAnalysis} agree very well with the macro model. The agreement with the IBM simulations is good as long as the macro model gives physically meaningful (i.e. positive) obstacle densities (examples 1,2,3 in Fig.~\ref{fig:stabAna_SOH_IBM}), but breaks down otherwise (example 4 in Fig.~\ref{fig:stabAna_SOH_IBM}). This shows both that the macro model can be used to gain insights into the IBM, but also that it is limited to certain parameter regimes.

\begin{figure}[t]
\centering
\includegraphics[width=\textwidth]{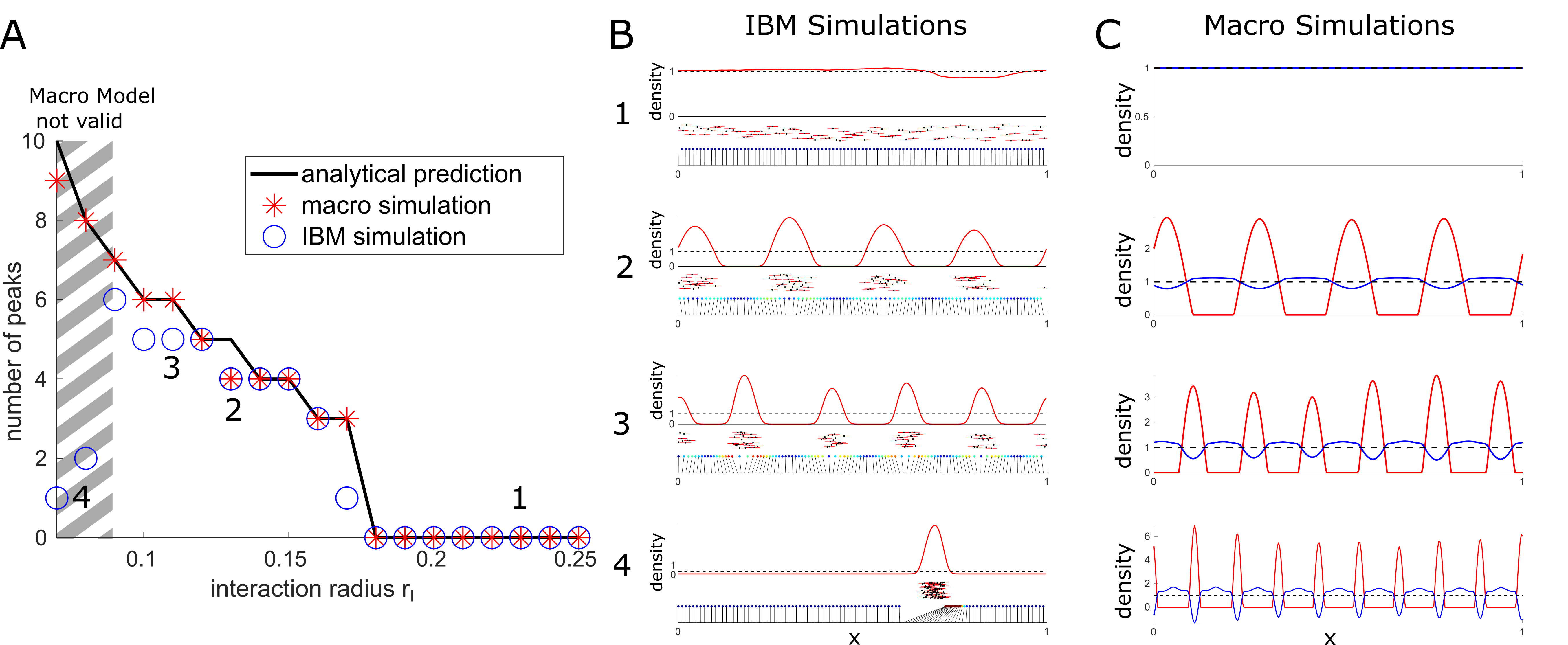}
\caption{\small{Linear stability analysis predictions. A: Number of peaks in dependence of interaction radius $ r_I$. Shown are the analytical predictions (black line), the macro simulation results (red star) and the IBM simulation results (blue circle). Numbers mark the examples in B and C. B,C: Final simulation results for the examples marked in A for the IBM (B) and the macro model (C). IBM results are depicted as in Fig.~\ref{fig:sims1D}, macroscopic SPP and obstacle densities are shown in red and blue respectively.}}
\label{fig:stabAna_SOH_IBM}
\end{figure}

%    ----------
%    Discussion 
%    ----------
\section{Discussion}

In this work we formulated an IBM model of the interaction of self-propelled, collectively moving SPPs with elastically tethered obstacles. Despite the seemingly simplicity of the interactions, we found that the system can self-organize into a big variety of patterns, including travelling bands, (transiently stable) trails and size controlled clusters. To investigate these patterns further we derived macroscopic equations for the obstacle and SPP densities and the SPP orientation. The asymptotic regime of interest assumed $\gamma$ to be small, i.e. fast obstacle spring relaxation (strong obstacle springs). The resulting continuum equations are non-linear and contain a non-local interaction term. Linear stability analysis revealed that the SPP-obstacle interactions have to be strong enough compared to the SPP self-repulsion to allow for patterns to evolve and allowed to estimate pattern size as a function of model parameters. We found that, surprisingly, SPP dynamics are independent of whether obstacles and SPPs repel or attract each other.\newline

In Sec.~\ref{sec:1dAnalysis} we discovered that the macroscopic SPP equation has gradient flow structure with a bi-phasic (short-range attractive, long-range repulsive) non-local obstacle-induced interaction kernel. Strong analytical results, such as energy dissipation estimates, exist for these type of equations, which suggests that it is possible, at least for certain cases, to construct steady states and assess their stability in a rigorous manner. Obvious extensions include 1D simulations of the SPP-obstacle model using an $\mathcal{O}(\gamma^2)$ approximation of the obstacle density or including the positional noise, as well as performing 2D or 3D simulations with the continuum model for various orders of approximations and systematic comparison with the IBM model.\newline

We found that both attractive or repulsive microscopic interactions between SPPs and obstacles cause a short-range attractive macroscopic effect on the SPP level, which leads to clustering. Clustering of organisms is ubiquitous in nature and is often attributed to direct attraction between the individuals. However our results suggest that the apparent attraction could be indirect and is in fact mediated by the environment. In other words it is possible the individuals feel no attraction towards each other, but will still form tight clusters. This could be relevant for example to understanding cell clustering or swarm formation.\newline

Our derivation relied heavily on the assumptions of smallness of $\gamma$. Mathematically this limitation manifests in the fact that the obstacle density can become negative, at which point the model becomes invalid. In the future we would like to derive macroscopic models that are and remain well-posed for any parameter combination. This will require a different closure method of the kinetic equations. Our current model seems to be able to capture several of the observed phenomena at the IBM model, such as the travelling bands or the clusters, however, for example the trail formation pattern will most likely require an extension of the current techniques.\newline

The current model describes interactions between SPPs and obstacles. In many instances, however, all components are immersed in a fluid. Past work has already studied how to derive and analyse SPP-fluid interactions \cite{Degond2019}. There exist models for how fluid properties are affected if it contains immersed objects. A famous example is the Oldroyd-B model, describing the visco-elasticity of fluids filled with spring dumbbells \cite{Oldroyd1950}. We plan to use our derivation strategy to derive equations for fluids filled with tethered obstacles and study how fluid properties such as viscosity are affected. An additional level of complexity we plan to tackle, is to combine all three components, the fluid, the obstacles and the SPPs. In this case a natural question appears: How big are the obstacles compared to the SPPs. The flexible techniques developed in this work will allow to answer this question by performing the coarse-graining at different levels.

\section*{Supplementary Material}

\paragraph{IBM Simulation Videos.}

The three supplementary videos
\begin{itemize}
\item \href{https://angelikamanhart.github.io/files/Movies/moving-clusters_smaller.avi}{moving-clusters.avi}
\item \href{https://angelikamanhart.github.io/files/Movies/trails_smaller.avi}{trails.avi}
\item \href{https://angelikamanhart.github.io/files/Movies/travelling-bands_smaller.avi}{travelling-bands.avi}
\end{itemize}
show the dynamics in time of the 2D IBM simulations depicted in Fig.~\ref{fig:IBM_patterns}. SPPs are shown in red, obstacles in blue.

\appendix

\section{Appendix}

\subsection{Simulation details}
\label{ssec:simDetails1D}

{ \it IBM simulations of Sec.~\ref{ssec:IBMsim}:}  We simulate the IBM model \eqref{eqn:IBMmodel_massless} in two space dimensions using Matlab with a timestep of $\Delta t=10^{-3}$. Model parameters are listed in Sec.~\ref{ssec:IBMsim}. Numerically we use the circle method described in \cite{Motsch_Navoret_MMS11}.\newline

\noindent{\it Macro-model simulations in 1D of Sec.~\ref{sec:numerics}:} We simulation the macroscopic model \eqref{eqn:macroSw_rho_1D}, \eqref{eqn:obsDensSimple_1D} in one space dimension using Matlab with spatial and temporal timesteps of $\Delta x=3\times 10^{-3}$, $\Delta t=10^{-2}$. The method used is described in \cite{Carrillo2015}.

\subsection{Properties of the operator $\mathcal{B}$ defined in \eqref{eqn:FokkerPlanck}}
\label{ssec:FPO}

%%%%%%%%%%%%%%%%%
%% LEMMA PROPERTIES OF FOKKER-PLANCK
%%%%%%%%%%%%%%%%%%%%%%%%%%%
It is a well-known fact that the operator $\mathcal{B}$ defined in \eqref{eqn:FokkerPlanck} is the generator of the 
Ornstein-Uhlenbeck stochastic process (see \cite{pavliotis2014}). 
For an extensive study of this operator we refer the reader to \cite{Achleitner2015}, \cite{Risken1996}, or \cite{RS1978}.
In the following result we collect a few properties needed in this paper.

\begin{lemma}[Properties of $\mathcal{B}$]
\label{lem:FokkerPlanck}
Let the operator $\mathcal{B}$ be defined by \eqref{eqn:FokkerPlanck} and let $\bm{i}=(i_1,i_2,i_3)$ be a multi-index. 
We define
\begin{align*}
\H_{\bm{i}}(\sigma)=H_{i_1}(\sigma_1)H_{i_2}(\sigma_2)H_{i_3}(\sigma_3),
\end{align*}
where
\begin{align*}
H_j(s)= (-1)^j e^{\frac{s^2}{2}}\frac{\dd^j}{\dd s^j} e^{-\frac{s^2}{2}}.
\end{align*}
Note that $H_j(s)$ are the (probabilistic) Hermite polynomials. Let us consider the following $L^2$-weighted space
\[
X := \bigg\{ f \in L^2( \R^3) : \int_{\R^3} f^2 M_1 \dd \sigma < \infty \bigg\} \, , 
\]
where $M_1$ is defined in \eqref{eqn:Gaussian} taking $\delta = 1$. For any two functions $f$ and $g$ in $X$, we define their weighted inner product by
\begin{align*}
\langle f, g \rangle_X := \int_{\R^3} f(\sigma) g(\sigma)M_1(\sigma) \dd \sigma.
\end{align*}
We then have the following properties:
\begin{enumerate}
\item[P1.] $\langle \H_{\bm{i}}, \H_{\bm{j}} \rangle_X = \bm{i}! \delta_{\bm{i}\bm{j}}$, where $\delta_{\bm{i}\bm{j}}$ is the Kronecker delta for multi-indices.
\item[P2.] $\mathcal{B}(\H_{\bm{i}})=-|\bm{i}|\H_{\bm{i}}$.
\item[P3.] The set $\{\H_{\bm{i}}\}_{\bm{i}}$ is a complete orthogonal basis of the $L^2$-weighted space $X$.
\item[P4.] $\H_{e_i}\H_{e_k}=\H_{e_i+e_k}+\delta_{ik}\H_{\bm{0}}$.
\item[P5.] $\H_{e_i}\H_{e_j + e_k}=\H_{e_i+e_j+e_k}+\delta_{ik}\H_{e_j}+\delta_{ij}\H_{e_k}$.
\end{enumerate}
We have used the notation $\bm{i}!=i_1!i_2!i_3!$ and $|\bm{i}| = i_1+i_2+i_3$. 
Note that P1 shows that $\H_{\bm{i}}$ are orthogonal with respect to the inner product $\langle \cdot, \cdot \rangle_X$
and P2 states that $\H_{\bm{i}}$ are eigenfunctions with eigenvalue $-|\bm{i}|$. 
In the product rules P4 and P5, $e_i$ denotes the $i$-th unit vector in $\R^3$.
\end{lemma}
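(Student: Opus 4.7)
The plan is to exploit the product structure of the operator, the weight, and the basis to reduce everything to one dimension. Observe that the Gaussian factorises as $M_1(\sigma) = \prod_{k=1}^3 \mu(\sigma_k)$ with $\mu(s)=(2\pi)^{-1/2}e^{-s^2/2}$, the operator splits as $\mathcal{B} = \sum_{k=1}^3 \mathcal{B}_k$ with $\mathcal{B}_k = \partial_{\sigma_k}^2 - \sigma_k \partial_{\sigma_k}$, and $\H_{\bm{i}}(\sigma) = \prod_k H_{i_k}(\sigma_k)$. Once the one-dimensional analogues of P1--P5 are established for the scalar Hermite polynomials $H_j$, the three-dimensional statements follow by Fubini (for inner products) and by linearity together with the Leibniz rule (for $\mathcal{B}$ acting on products of functions that each depend on a single coordinate).

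For P1 and P2 I would rely on the Rodrigues formula. Writing $\int H_i H_j \,\mu\, ds = (-1)^j(2\pi)^{-1/2}\int H_i(s)\,(d/ds)^j e^{-s^2/2}\,ds$, integration by parts $j$ times (with boundary terms killed by the Gaussian decay) produces $(-1)^j\int (d/ds)^j H_i \cdot e^{-s^2/2}(2\pi)^{-1/2}\,ds$; this vanishes when $j>i$ since $H_i$ is a polynomial of degree $i$, and equals $j!$ when $i=j$. This gives P1 in one dimension, and the tensor product extends it to $\bm{i}!\,\delta_{\bm{i}\bm{j}}$. For P2, the identity $H_j'' - s H_j' = -j H_j$ is Hermite's ODE, which can be obtained by differentiating Rodrigues once or from the classical three-term recurrences; summing over the three coordinates gives the eigenvalue $-|\bm{i}|$. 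For P4 and P5 the key tool is the recurrence $s H_j(s) = H_{j+1}(s) + j H_{j-1}(s)$. In P4 one splits into the cases $i\neq k$ (the factors live in different variables and simply multiply) and $i=k$ (using $H_1(s)^2 = s^2 = H_2(s)+1$); P5 is handled identically, with the Kronecker deltas appearing whenever indices in $e_i$ coincide with those in $e_j + e_k$.

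The main obstacle is P3, completeness in $X$. One cannot close this by a direct tensor-product argument, because completeness in each factor has to be combined with a density argument. I would first show that polynomials are dense in $X$, which reduces to the determinacy of the Gaussian moment problem; the cleanest route is via the generating function
\[
\exp\!\Bigl(t\cdot\sigma - \tfrac{1}{2}|t|^2\Bigr) = \sum_{\bm{i}} \frac{t^{\bm{i}}}{\bm{i}!}\,\H_{\bm{i}}(\sigma),
\]
which converges in $X$ and whose $X$-linear span is dense, so that any $f\in X$ with $\langle f,\H_{\bm{i}}\rangle_X = 0$ for all $\bm{i}$ has vanishing Laplace transform and must be zero almost everywhere. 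Since this completeness theorem is classical and already covered by the references \cite{Achleitner2015,Risken1996,RS1978} cited in the text, I would give the generating-function sketch and cite the standard sources rather than redoing the argument in full.
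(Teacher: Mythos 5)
The paper does not prove Lemma~\ref{lem:FokkerPlanck}; it presents these as well-known facts about the Ornstein--Uhlenbeck generator and the tensor Hermite basis and simply cites \cite{pavliotis2014,Achleitner2015,Risken1996,RS1978}. Your proposal supplies an actual self-contained argument, and the strategy is the right one: factorise the Gaussian weight, the operator, and the basis into one-dimensional pieces, establish the scalar versions of P1--P5 via the Rodrigues formula, Hermite's ODE, and the three-term recurrence, and then lift to $\mathbb{R}^3$ by Fubini and linearity. The generating-function argument for P3 is also the standard route for completeness in $L^2(M_1\dd\sigma)$, and your acknowledgement that the nontrivial point is $X$-convergence of the series is exactly where the care is needed (it does converge, since $\sum_{\bm{i}} |t|^{2\bm{i}}/\bm{i}! = e^{|t|^2} < \infty$, after which vanishing of the bilateral Laplace transform of $fM_1$ forces $f=0$ a.e.).

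Two small remarks. First, a sign slip: after integrating by parts $j$ times starting from $(-1)^j(2\pi)^{-1/2}\int H_i\, (d/ds)^j e^{-s^2/2}\dd s$, the accumulated $(-1)^j$ from the $j$ integrations cancels the Rodrigues prefactor, so the result is $(2\pi)^{-1/2}\int H_i^{(j)}(s)\, e^{-s^2/2}\dd s$ with \emph{no} residual $(-1)^j$; your stated conclusion $j!$ is nevertheless correct since $H_j$ is monic of degree $j$. Second, when checking P4 and P5 it is worth verifying the fully coincident case separately: with $i=j=k$, the recurrence $sH_2 = H_3 + 2H_1$ gives $\H_{e_i}\H_{2e_i} = \H_{3e_i} + 2\H_{e_i}$, which matches $\delta_{ik}\H_{e_j}+\delta_{ij}\H_{e_k} = 2\H_{e_i}$ on the right-hand side; your phrase ``handled identically'' covers this but it is the one place where the two Kronecker deltas overlap and both fire.
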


%%%%%%%%%%%%%%%%%
%% END LEMMA PROPERTIES OF FOKKER-PLANCK
%%%%%%%%%%%%%%%%%%%%%%%%%%%

\subsection{Calculation of the obstacle density.}
\label{ssec:moments}

In this section we detail the calculations of 0-$th$ order moment of $f$, $\rho_f$, in terms of expansions with respect to $\gamma$ and $\delta $. As outlined in the main text we will perform the following steps:
\begin{enumerate}
\item Perform the change of variables $\sqrt{\delta }\sigma=x-y$. This changes the integrand to be proportional to $M_1(\sigma)h_i(\sigma, x-\sqrt{\delta }\sigma,t)$ for $\rho_{f_i}$.
\item Next we Taylor expand $h_i(\sigma,x-\sqrt{\delta }\sigma,t)$ around $\delta =0$ using the expansions of above.
\item Then we calculate the contributions using the scaling condition \eqref{eqn:scaling2}, 
the orthogonality of the Hermite polynomials (P1) and the product rule (P4) of Lemma \ref{lem:FokkerPlanck}.
\end{enumerate}

The following result will be helpful for the subsequent calculations.
\begin{lemma}
\label{lem:evenOdd}
Let $h_1^k$ and $h_2^k$, for $k = 0, 1, \ldots$, be the solutions of the above expansion and let their representations w.r.t the basis of Hermite polynomials be given by
\begin{align*}
h_1^k=\sum_{\bm{i}} a^k_{\bm{i}} \H_{\bm{i}}(\sigma), \qquad h_2^k=\sum_{\bm{i}} b^k_{\bm{i}} \H_{\bm{i}}(\sigma),
\end{align*}
where $\bm{i}$ is a multiindex and $a^k_{\bm{i}}$ and $b^k_{\bm{i}}$ are functions of $y$ and $t$. Then it holds that
\begin{align*}
a^k_{\bm{i}}\equiv 0 \qquad \text{for} \quad |\bm{i}|\!\!\! \mod 2 = k \!\!\!\mod 2, \quad \text{or} \quad |\bm{i}|>k+1\\
b^k_{\bm{i}}\equiv 0 \qquad \text{for} \quad |\bm{i}| \!\!\!\mod 2 \neq k \!\!\!\mod 2, \quad \text{or} \quad |\bm{i}|>k+2
\end{align*}
\end{lemma}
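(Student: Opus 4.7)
The plan is to prove both statements simultaneously by strong induction on $k$, exploiting the fact that the operator $\mathcal{B}$ is diagonal in the Hermite basis. The key observation is property P2 of Lem.~\ref{lem:FokkerPlanck}: since $\mathcal{B}(\H_{\bm{i}}) = -|\bm{i}|\H_{\bm{i}}$, the inverse of $\mathcal{B}$ on its range acts termwise in the Hermite basis and therefore preserves the parity $|\bm{i}|\!\!\mod 2$ as well as the maximum Hermite degree of its argument. Combined with the scaling condition \eqref{eqn:scaling2}, which fixes the $\H_{\bm{0}}$ component of each $h_i^k$ to be zero, this reduces the problem to tracking the parity and maximum polynomial degree in $\sigma$ of the right-hand sides obtained at each order of $\delta^{1/2}$.

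The base case $k=0$ is immediate from the explicit solutions in \eqref{eqn:h1sol} and \eqref{eqn:h2sol}: $h_1^0 = \ve_i \H_{e_i}$ has parity odd and maximum degree $1 = k+1$, while $h_2^0 = \tfrac{1}{2}\ve_k\ve_j \H_{e_k+e_j}$ has parity even and maximum degree $2 = k+2$. For the inductive step, I would expand the smooth coefficients in the right-hand sides of \eqref{eqn:hSystem} as Taylor series in $\sqrt{\delta}$ around $y$, namely
\begin{equation*}
\ve|_{y+\sqrt{\delta}\sigma} = \sum_{m\geq 0} \frac{\delta^{m/2}}{m!}\,(\sigma\cdot\nabla_y)^m \ve(y,t),
\end{equation*}
and analogously for $\Div(\ve)|_{y+\sqrt{\delta}\sigma}$, then collect all contributions of order $\delta^{k/2}$ into the right-hand side of the equation for $h_1^k$ or $h_2^k$.

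For the $h_1^k$ equation, the relevant contributions come from $-\sigma_i (\sigma\cdot\nabla)^{k}\ve_i/k!$ (degree $k+1$ in $\sigma$) and $(\sigma\cdot\nabla)^{k-1}\Div(\ve)/(k-1)!$ (degree $k-1$), both of parity $(k+1)\!\!\mod 2$; invoking the inductive hypothesis is not needed here since the right-hand side of \eqref{eqn:hSystem} for $h_1$ does not involve $h_1$ itself. For the $h_2^k$ equation, the right-hand side involves $\dt h_1^{k-1}$, products $h_1^j (\sigma\cdot\nabla)^{k-1-j}\Div(\ve)$ for $0\le j\le k-1$, and products $(\sigma\cdot\nabla)^{k-j}\ve\cdot(\nabla_\sigma h_1^j - \sigma h_1^j)$ for $0\le j\le k$. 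Using the inductive hypothesis that $h_1^j$ has parity $(j+1)\!\!\mod 2$ and degree at most $j+1$, each of these terms is checked to have parity $k\!\!\mod 2$ and degree at most $k+2$ (the worst case being $\sigma h_1^j$ of degree $j+2$ multiplied by a $\sigma$-polynomial of degree $k-j$).

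The main obstacle I expect is bookkeeping: verifying that the parity of every single contribution to the right-hand side of the $h_2^k$ equation is consistent requires careful use of the inductive hypothesis and of the elementary identity that the parity of a product of polynomials is the sum of their parities modulo two. A secondary issue is the Fredholm solvability condition $\int \mathrm{RHS}\, M_1 \, \dd\sigma = 0$ needed to invert $\mathcal{B}$; this is equivalent to cancellation of the $\H_{\bm{0}}$-component of the right-hand side, which follows from the mass-conservation structure inherited from $\mathcal{A}_y$ and is already used implicitly to derive the $h_i^j$ in the main text (see also \cite{AS2019}). Once solvability is granted, applying $\mathcal{B}^{-1}$ preserves parity and does not raise the maximum Hermite degree, and the scaling condition \eqref{eqn:scaling2} removes the $\H_{\bm{0}}$ freedom, completing the induction.
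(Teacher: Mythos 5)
Your proof is correct and uses the same strategy as the paper, which simply states that the result follows by induction from the explicit base-case formulas \eqref{eqn:h1sol}--\eqref{eqn:h2sol} and leaves the inductive step to the reader. Your write-up supplies the missing Taylor-expansion and parity/degree bookkeeping, and you correctly observe the refinement that the bound on $h_1^k$ in fact follows directly from the structure of the right-hand side of \eqref{eqn:hSystem} without using the inductive hypothesis, so that only the $h_2^k$ bound genuinely requires induction on the already-established $h_1^j$ claims.
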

\begin{proof}
This can be shown by induction. For the initial case we use the explicit solutions given in \eqref{eqn:h1sol} and \eqref{eqn:h2sol}.
\end{proof}

\noindent\textbf{Notation:} In the following, if no further argument is given, functions are evaluated at $(\sigma,x,t)$ and $\pd_i:=\frac{\pd}{\pd x_i}$. We use the Einstein summation convention. In general we often suppress the dependence on time $t$.\newline

\begin{remark}
\label{rmk:evenOdd}
In the following we often use Lemma \ref{lem:evenOdd} in combination with the fact that odd-order moments of $M_1$ are zero.
\end{remark}

\noindent Preparation for Step 2 in the above procedure: Taylor expand $h_r(\sigma,x-\sqrt{\delta }\sigma,t)$, 
\begin{align}
\label{eqn:taylor_h}
h_r(\sigma, x-\sqrt{\delta }\sigma)=&h_r^0-\sqrt{\delta }\sigma_k \pd_k h_r^0+\sqrt{\delta }h_r^1\\
&+\frac{\delta }{2}\sigma_i\sigma_j\pd_{ij}h_r^0-\delta  \sigma_i \pd_i h_r^1 + \delta  h_r^2 + \mathcal{O}(\delta ^{3/2}),\nonumber
\end{align}
where $r = 1, 2$.
We start with the \textbf{0-th order density $\rho_{f_0}$}:
\begin{align*}
\rho_{f_0}(x,t)=\int f_0(x,y)\dd y= \int M_\delta (x-y) \dd y=1
\end{align*}
For the \textbf{first order density $\rho_{f_1}$} we use the reformulation in terms of $h_1$:
\begin{align*}
\rho_{f_1}(x,t)&=\int f_1(x,y)\dd y=\frac{1}{\sqrt{\delta }} \int M_\delta (x-y) h_1\left(\frac{x-y}{\sqrt{\delta }},y\right) \dd y\\
&=\frac{1}{\sqrt{\delta }} \int M_1(\sigma) h_1\left(\sigma,x-\sqrt{\delta }\sigma\right) \dd \sigma\\
&=\langle 1, -\sigma_k \pd_k h_1^0\rangle + \mathcal{O}(\delta )
\end{align*}
In the second line we have used Step 1, the change of variables. In the third line we have used \eqref{eqn:taylor_h} for $r=1$ together with the fact that the order $\delta ^{-1/2}$ term and the order 1 term involving $h_1^1$ are zero due to the normalization condition \eqref{eqn:scaling2}. For the order $\sqrt{\delta }$-terms we used Rmk.~\ref{rmk:evenOdd} to show it is zero.\newline 
Hence we are left with one term. We use \eqref{eqn:h1sol} and calculate
\begin{align*}
\langle 1, -\sigma_k \pd_k h_1^0\rangle &=- \pd_k \ve_i\langle \H_{e_k}, \H_{e_i}\rangle=- \pd_k \ve_k,
\end{align*}
where we have used P1 of Lemma \ref{lem:FokkerPlanck}, i.e. $\langle \H_{e_k},\H_{e_i}\rangle=\delta_{ki}$. This shows that indeed
\begin{align*}
\rho_{f_1}(x,t)&=- \pd_k \ve_k+ \mathcal{O}(\delta )
\end{align*}
and finishes the calculations for $\rho_{f_1}$. The calculations for the order $\delta $ term are similar and omitted here.\newline

\noindent We continue in similar fashion with the \textbf{second order density $\rho_{f_2}$}: We use the reformulation in terms of $h_2$ and get

\begin{align*}
\rho_{f_2}(x,t)&=\int f_2(x,y)\dd y=\frac{1}{\delta } \int M_\delta (x-y) h_2\left(\frac{x-y}{\sqrt{\delta }},y\right) \dd y\\
&=\frac{1}{\delta } \int M_1(\sigma) h_2\left(\sigma,x-\sqrt{\delta }\sigma\right) \dd \sigma.
\end{align*}
If we now inspect \eqref{eqn:taylor_h} for $r=2$, we find that, as above the scaling condition \eqref{eqn:scaling2} 
leads to the $\delta ^{-1}$ order term involving $h_2^0$, the $\delta ^{-1/2}$-order term involving $h_2^1$ and the order 
one term involving $h_2^2$ being zero. For the remaining $\delta ^{-1/2}$-order term we refer to 
Rmk.~\ref{rmk:evenOdd} and hence it is also 0. For the remaining order one terms we calculate
\begin{align*}
A_1&:=\frac{1}{2} \int M_1(\sigma) \sigma_i\sigma_j \pd_{ij}h_2^0 \dd \sigma =\frac{1}{4} \pd_{ij}(\ve_k\ve_l)\int M_1(\sigma) \sigma_i\sigma_j \H_{e_k+e_l} \dd \sigma\\
&=\frac{1}{4} \pd_{ij}(\ve_k\ve_l)\langle \H_{e_i+e_j}, \H_{e_k+e_l}\rangle \, ,\\
A_2&:=- \int M_1(\sigma) \sigma_i \pd_{i}h_2^1 \dd \sigma =\pd_i(\pd_t \ve_k-\ve_j \pd_j \ve_k) \int M_1(\sigma) \sigma_i \H_{e_k}\dd \sigma\\
&=\pd_i(\pd_t \ve_k-\ve_j \pd_j \ve_k) \langle \H_{e_i}, \H_{e_k}\rangle \, ,
\end{align*}
where we have used P1 and P4 of Lem.~\ref{lem:FokkerPlanck}. We continue
\begin{align*}
A_1 & = \frac{1}{2}\pd_{ij}(\ve_i\ve_j)       \, ,\\
A_2 & = \pd_i(\pd_t \ve_i-\ve_j \pd_j \ve_i) \, ,
\end{align*}
where we used the identity
\[
\langle \H_{e_i+e_j}, \H_{e_k+e_l}\rangle=\delta_{ik}\delta_{jl}+\delta_{il}\delta_{jk}
\]
for $A_1$. Finally we calculate
\begin{align*}
\rho_{f_2}(x,t)&=A_1+A_2+\mathcal{O}(\delta )=\left\{\pd_t \pd_i \ve_i+\frac{1}{2}\pd_i\left[\ve_i \pd_j \ve_j-\ve_j\pd_j \ve_i\right] \right\}+\mathcal{O}(\delta ).
\end{align*}
Note the the fact that the remaining term is $\mathcal{O}(\delta )$ and not $\mathcal{O}(\sqrt{\delta })$ is again thanks to Rmk.~\ref{rmk:evenOdd} 
and does not require explicit knowledge of the shape of $h_2^3$.

\bibliographystyle{abbrv}
\bibliography{sperm_bibliography}

\end{document}